\newtheorem{lemma}{Lemma}
\newtheorem{theorem}{Theorem}
\newtheorem{claim}{Claim}
\newtheorem{proposition}{Proposition}
\newenvironment{subproof}[1][\proofname]{%
	\begin{proof}[#1]%
	}{%
	\end{proof}%
}
\begin{document}

	\title{An analysis of least-squares oversampled collocation methods for compactly perturbed boundary integral equations in two dimensions}
	%\title{Convergence analysis of oversampled collocation boundary element methods in 2D\thanks{The work of the first author was supported by the UK Engineering and Physical Sciences Research Council (EPSRC) grant EP/L016516/1 for the University of Cambridge Centre for Doctoral Training, the Cambridge Centre for Analysis. The second author was supported by KU Leuven IF project C14/15/055.}}	
		\author[1]{Georg Maierhofer}
	\affil[1]{Laboratoire Jacques-Louis Lions, Sorbonne University, France\\georg.maierhofer@sorbonne-universite.fr\vspace{1em}}
	\author[2]{Daan Huybrechs}	
	\affil[2]{Department of Computer Science\\
		KU Leuven\\ Belgium\\
		daan.huybrechs@kuleuven.be}	
	
	\maketitle
	
 \begin{abstract}
 	In recent work (Maierhofer \& Huybrechs, 2022, Adv.\ Comput.\ Math.), the authors showed that least-squares oversampling can improve the convergence properties of collocation methods for boundary integral equations involving operators of certain pseudo-differential form. The underlying principle is that the discrete method approximates a Bubnov--Galerkin method in a suitable sense. In the present work, we extend this analysis to the case when the integral operator is perturbed by a compact operator $\mathcal{K}$ which is continuous as a map on Sobolev spaces on the boundary, $\mathcal{K}:H^{p}\rightarrow H^{q}$ for all $p,q\in\mathbb{R}$.
 	
 	This study is complicated by the fact that both the test and trial functions in the discrete Bubnov-Galerkin orthogonality conditions are modified over the unperturbed setting. Our analysis guarantees that previous results concerning optimal convergence rates and sufficient rates of oversampling are preserved in the more general case. Indeed, for the first time, this analysis provides a complete explanation of the advantages of least-squares oversampled collocation for boundary integral formulations of the Laplace equation on arbitrary smooth Jordan curves in 2D. Our theoretical results are shown to be in very good agreement with numerical experiments.
 \end{abstract}

{\small\paragraph{Keywords} Convergence Analysis {\Large$\cdot$} Fredholm Integral Equations {\Large$\cdot$} Collocation Methods {\Large$\cdot$} \newline\begin{flushright} \vspace{-0.6cm}Compact Operators\ \ \ \ \ \ \ \ \ \ \ \ \ \ \ \ \ \ \ \ \ \ \ \ \ \ \ \ \ \ \ \ \ \ \ \ \ \ \ \ \ \ \ \ \ \ \ \ \ \ \ \ \ \ \ \ \ \ \ \ \ \ \ \ \ \ \ \ \ \ \ \ \ \ \ \ \ \ \ \ \ \ \ \ \ \ \ \ \ \ \,
	\end{flushright}
\paragraph{Mathematics Subject Classification (2020)}45B05 {\Large$\cdot$} 65N35}

\newpage

\section{Introduction}

Collocation methods are widely used for the numerical solution of integral equations \cite{hackbusch2012integral,brunner_2004}. Compared to Galerkin methods, collocation methods are easier and more efficient to implement, since they require fewer integral evaluations to compute the approximate solution. However, this comes at the expense of reduced robustness and reduced convergence rates.

Motivated by advances in approximation theory \cite{FNA1,FNA2}, recent years have seen the successful application of oversampling as a framework to improve the convergence properties of collocation methods, whilst broadly preserving their conceptual simplicity. Meanwhile, the potential of oversampling has been realized for instance in the context of Trefftz methods \cite{barnett2008stability,HUYBRECHS201992}, in hybrid numerical-asymptotic methods for high-frequency wave scattering \cite{gibbs2020} and for eigenvalue problems involving ordinary or partial differential equations \cite{hashemi2021rectangular}.

In recent work the authors provided a rigorous analysis of the convergence properties of least-squares oversampled collocation methods for two-dimensional boundary integral equations~\cite{maierhoferhuybrechs2021}. The method is least squares in the sense that more collocation points are considered than degrees of freedom, leading to a (dense) rectangular linear system, which is solved in a least squares sense. This work has included a detailed analysis which explains some of the favourable properties of oversampling as observed in the aforementioned studies. A part of this analysis concerned an asymptotic convergence result for spline-based collocation using equispaced points \cite[Thm.~3]{maierhoferhuybrechs2021}, which clearly shows a trade-off between the number of collocation points and the number of basis functions. This result was proven for integral operators of certain pseudo-differential form and follows a framework introduced by \cite{sloan1988quadrature}, \cite{sloan1989quadrature} and \cite{chandler1990} in the study of quadrature rules for Galerkin methods. The numerical experiments presented in \cite{maierhoferhuybrechs2021} suggest that the asymptotic convergence results hold true also for a wider class of integral operators. 

In the study of quadrature rules for Galerkin methods (and so-called qualocation methods), the literature offers a way to pass convergence results to compact perturbations of integral operators \cite[\S 3]{Arnold1985}. Unfortunately, this argument does not immediately apply to the oversampled collocation method. However, it serves as a basis and motivation for a novel perturbative analysis presented in this manuscript which allows us to extend the validity of the asymptotic convergence result of oversampled collocation to similar compact perturbations of the integral operator. This extends the validity of prior results from \cite{maierhoferhuybrechs2021} to a number of relevant cases. As an example in this paper we consider indirect integral formulations for boundary value problems of the Laplace equation on arbitrary smooth curves in 2D.

The present manuscript is structured as follows. In \S\ref{sec:mathematical_introduction} we introduce the mathematical framework, including the types of integral equations we aim to solve and the proposed method of solution, which is least-squares oversampled collocation. In \S\ref{sec:previous_convergence_results} we summarize previous convergence results of this method. In \S\ref{sec:examples_of_integral_operators_in_this_form} we show two examples of integral formulations for boundary value problems of Laplace's equation that involve integral operators studied in the present work. The main novel contribution of the paper follows in \S\ref{sec:convergence_analysis_of_perturbed_integral_operator}. The main result is formulated in Thm.~\ref{thm:compact_perturbation_of_discrete_bubnov_galerkin}, which extends \cite[Thm.~3]{maierhoferhuybrechs2021} to a much wider class of integral operators. A complete proof of this result is given in \S\ref{sec:convergence_analysis_of_perturbed_integral_operator}, though some of the lengthier calculations are relegated to \ref{app:proof_of_central_a_priori_estimate} and \ref{sec:app_proof_of_lemmas} for the sake of a clean presentation. Next, the predictions of the theorem are tested on several numerical examples in \S\ref{sec:numerical_experiments} and are found to match practical computations very well. Finally, we provide some concluding remarks and an outlook on potential future research in \S\ref{sec:conclusions}.

\section{The oversampled collocation method and previous results}\label{sec:mathematical_introduction}

We start with a number of definitions and an overview of notation. We consider integral equations of the form
\begin{align}\label{eqn:general_form_integral_equation}
	Vu=f,
\end{align}
where the integral operator $V:H^{p+\alpha}([0,1))\rightarrow H^{p-\alpha}([0,1))$ is a continuous isomorphism for a given $\alpha\in\mathbb{R}$ and any $p\in\mathbb{R}$. Here, we denote by $H^p:=H^{p}([0,1))$ the periodic Sobolev space of order $p\in\mathbb{R}$ on $[0,1)$, together with the norm $\|\cdot\|_s$. This Sobolev norm can be conveniently evaluated for integrable functions $g$ in terms of its Fourier coefficients $\hat{g}_m:=\int_0^1e^{-2\pi i m t}g(t)\,\dd t, \,m\in\mathbb{Z},$
\begin{align}%\label{eqn:intro_def_boundary_sobolev_norms1}
	%H^s([0,1))&:=\mathrm{cl}_{\|\cdot\|_{H^s([0,1))}}{C^\infty([0,1))},\\
	\label{eqn:intro_def_boundary_sobolev_norms2}
	\|g\|_{s}^2&=\sum_{m\in\mathbb{Z}}[m]^{2s}|\hat{g}_m|^2, \quad
	\left[m\right]:=\begin{cases}1,&\quad\text{if\ }m=0,\\
		|m|,&\quad\text{if\ }m\neq 0.
	\end{cases}
\end{align}

Throughout this manuscript we will make use of commonly known properties of periodic Sobolev spaces. For further details on these spaces we refer the reader to \cite[\S5.3]{saranen2013periodic}. In the following, we denote by $I$ the periodic unit interval $I=[0,1)$. Our goal is to find an approximation to the unknown function $u:I\rightarrow\mathbb{C}$, given a known function $f\in C^{\infty}(I)$.

Our method of solution is an oversampled collocation method, i.e., a collocation method with more collocation points $M$ than degrees of freedom $N$. The approximation space is described by a periodic B-spline basis $\{ \chi_j \}_{j=1}^N$, consisting of piecewise polynomials of degree $d$. To be precise, for $N\in \mathbb{N}$, $N\geq 2$, we  define $S_N$ to be the space of $1$-periodic, $d-1$ times continuously differentiable piecewise polynomial functions of degree $d$, subordinate to the equispaced grid
\[
\{0,1/N,2/N,\dots,1-1/N\}.
\]
As collocation points we similarly use a grid $\{\frac{m}{M}\}_{m=0}^{M-1}$ of $M$ equispaced points, for $M\in\mathbb{N}$, $M\geq 2$. The oversampled collocation system is
\begin{equation}\label{eqn:system}
	A \mathbf{x} = \mathbf{b}, \qquad A_{i,j} = (V\chi_j)\left(\frac{i}{M}\right), \qquad b_i = f\left(\frac{i}{M}\right)
\end{equation}
in which $A \in \mathbb{C}^{M \times N}$ and $\mathbf{b} \in \mathbb{C}^M$.

This leads us to define the following bilinear form for functions $f,g\in C(I)$:
\begin{align*}
	\left\langle f,g\right\rangle_M:=\frac{1}{M}\sum_{m=0}^{M-1}\overline{f\left(\frac{m}{M}\right)}g\left(\frac{m}{M}\right).
\end{align*}
In the oversampled collocation method (as described in \cite[\S2 \& Eq.~(14)]{maierhoferhuybrechs2021}) we determine the approximation $u_N^{(M)}\in S_N$ of the true solution $u$ of Eq.~\eqref{eqn:general_form_integral_equation} from a least-squares solution of the overdetermined linear system Eq.~\eqref{eqn:system} that arises if we take $M\geq N$ collocation points. In terms of the bilinear form defined above, we can characterise (see \cite[Eq.~(14)]{maierhoferhuybrechs2021} and also \cite[\S4.2.2]{maierhoferthesis2021}) this $u_N^{(M)}$ as the unique element of $S_N$ satisfying the following conditions:
\begin{align*}
	\left\langle V\chi_N,Vu_N^{(M)}\right\rangle_M=\left\langle V\chi_N,f\right\rangle_M,\quad \forall\chi_N\in S_N.
\end{align*}
Equivalently, observing that $Vu=f$ for the exact solution $u$ of Eq.~\eqref{eqn:general_form_integral_equation}, we may specify $u_N^{(M)}$ as the unique element in $S_N$ satisfying
\begin{align}\label{eqn:discrete_orthogonality_conditions}
	\left\langle V\chi_N,Vu_N^{(M)}\right\rangle_M=\left\langle V\chi_N,Vu\right\rangle_M,\quad \forall\chi_N\in S_N.
\end{align}

\subsection{Previous convergence results}\label{sec:previous_convergence_results}

One way to study optimal convergence rates is based on Fourier analysis, as first introduced in the context of Galerkin methods by \cite{sloan1988quadrature,sloan1989quadrature,chandler1990}. Based on this framework, in \cite{maierhoferhuybrechs2021} an a-priori estimate was given which allows us to understand the error in the oversampled collocation method if $V$ has a certain pseudodifferential form. This form is described by the action of $V=V_0$ on an arbitrary function $g$ as
\begin{align}\label{eqn:general_pseudodifferential_form_V_0}
	V_0g(x)=C\left(\sum_{m\in\mathbb{Z}}[m]^{2\alpha}\hat{g}_me^{2\pi i m x}\right),
\end{align}
for some constant $C\in\mathbb{C}\setminus\{0\}$ and where $[m]$ is as defined in Eq.~\eqref{eqn:intro_def_boundary_sobolev_norms2}. By the equality in Eq.~\eqref{eqn:general_pseudodifferential_form_V_0} we mean that for $g\in C^{\infty}_{\mathrm{per}}(I)$ the expression holds exactly and the operator is extended to the domain $H^{p+\alpha}$ of $V_0$ by density. This means that $V_0$ maps Fourier modes to constant multiples of themselves with the factor $[m]^{2\alpha}$. In this specific form the following error estimate holds:
\begin{theorem}[Thm.~3 in {\cite{maierhoferthesis2021}}]\label{thm:convergence_rates_equispaced_grids}
	If the consistency condition $d>2\alpha$ is satisfied and $V=V_0$, as described in Eq.~\eqref{eqn:general_pseudodifferential_form_V_0}, then there are constants $N_0,C_{d,\alpha}>0$ depending on $d,\alpha$, but independent of $u,N,M$, such that the solution of Eq.~\eqref{eqn:discrete_orthogonality_conditions} satisfies following error estimate for any $M\geq N\geq N_0$:
	\begin{align*}
		{\|u_N^{(M)}-\tilde{u}\|_{{4\alpha-(d+1)}}}\leq C_{d,\alpha}\left(M^{-(d+1)+2\alpha}+N^{-2(d+1)+4\alpha}\right){\|\tilde{u}\|_{{d+1}}}.
	\end{align*}
\end{theorem}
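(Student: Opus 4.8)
The plan is to exploit the special structure of $V_0$ as a Fourier multiplier. Since $V_0$ maps $e^{2\pi imx}$ to $C[m]^{2\alpha}e^{2\pi imx}$, the $L^2$ inner product satisfies $(V_0 v,V_0 w)_0=|C|^2\,(v,w)_{2\alpha}$, where $(\cdot,\cdot)_{2\alpha}$ is the inner product inducing $\|\cdot\|_{2\alpha}$. Hence the exact-integration analogue of Eq.~\eqref{eqn:discrete_orthogonality_conditions}, namely $(V_0\chi_N,V_0 u_N^{(G)})_0=(V_0\chi_N,V_0\tilde{u})_0$ for all $\chi_N\in S_N$, characterises $u_N^{(G)}$ as the $H^{2\alpha}$-orthogonal projection of $\tilde{u}$ onto $S_N$. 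I would therefore split
\[
u_N^{(M)}-\tilde{u}=\bigl(u_N^{(M)}-u_N^{(G)}\bigr)+\bigl(u_N^{(G)}-\tilde{u}\bigr)
\]
and estimate the two pieces separately: the second is a pure spline approximation error, the first is the perturbation caused by replacing $(\cdot,\cdot)_0$ by the quadrature form $\langle\cdot,\cdot\rangle_M$.

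For the projection term, the consistency condition $d>2\alpha$ ensures $S_N\subset H^{2\alpha}$ with full approximation order up to $d+1$, so quasi-optimality of the $H^{2\alpha}$-projection together with standard periodic-spline approximation theory gives $\|u_N^{(G)}-\tilde{u}\|_{2\alpha}\le CN^{2\alpha-(d+1)}\|\tilde{u}\|_{d+1}$, and an Aubin--Nitsche duality argument in the Sobolev scale upgrades this to $\|u_N^{(G)}-\tilde{u}\|_{4\alpha-(d+1)}\le CN^{4\alpha-2(d+1)}\|\tilde{u}\|_{d+1}$; the exponents are admissible because $4\alpha-(d+1)\le 2\alpha\le d+1$. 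This produces the $N^{-2(d+1)+4\alpha}$ contribution.

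For the perturbation term $e_N:=u_N^{(M)}-u_N^{(G)}\in S_N$, subtracting the two orthogonality relations and using Galerkin orthogonality of $u_N^{(G)}$ yields the Strang-type identity
\[
\langle V_0\chi_N,V_0 e_N\rangle_M=\langle V_0\chi_N,V_0(\tilde{u}-u_N^{(G)})\rangle_M-\bigl(V_0\chi_N,V_0(\tilde{u}-u_N^{(G)})\bigr)_0,\qquad\forall\chi_N\in S_N,
\]
so $e_N$ is driven solely by the quadrature error of $\langle\cdot,\cdot\rangle_M$ on products involving $S_N$. I would evaluate the right-hand side in Fourier variables using (a) the representation $\widehat{(\chi_N)}_m=\hat{B}(m/N)\,\tilde{c}(m\bmod N)$ valid for every $\chi_N\in S_N$, where $\hat{B}$ is the (rescaled) B-spline symbol, which decays like $[m]^{-(d+1)}$ and vanishes whenever $m$ is a nonzero multiple of $N$, and (b) the aliasing identity $\langle p,q\rangle_M-(p,q)_0=\sum_{k\ne l,\ k\equiv l\,(\mathrm{mod}\ M)}\overline{\hat p_k}\,\hat q_l$. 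Taking $\chi_N=e_N$ and using the discrete coercivity $\langle V_0 e_N,V_0 e_N\rangle_M\gtrsim\|V_0 e_N\|_0^2$ — which holds for $M\ge N\ge N_0$ — the decay of $\hat B$ suppresses precisely the high frequencies of $V_0 e_N$ that are aliased into low ones, and combined with $M\ge N$ and the smallness of $\tilde{u}-u_N^{(G)}$ established above this gives $\|e_N\|_{2\alpha}\le CM^{-(d+1)+2\alpha}\|\tilde{u}\|_{d+1}$; since $\|\cdot\|_{4\alpha-(d+1)}\le\|\cdot\|_{2\alpha}$ this yields the $M^{-(d+1)+2\alpha}$ term, and adding it to the projection estimate proves the theorem. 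An alternative to the Strang route is to pass entirely to the symbol variables and write Eq.~\eqref{eqn:discrete_orthogonality_conditions} as a finite linear system $(D+R)\tilde{v}=D\tilde{v}_{\mathrm{proj}}$, with $D$ the diagonal $H^{2\alpha}$-Galerkin block and $R$ the aliasing remainder, and then run a Neumann-series perturbation argument once $\|D^{-1}R\|$ is controlled; this mirrors the treatment of Galerkin quadrature in \cite{sloan1988quadrature,chandler1990}.

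I expect the main obstacle to be the sharp aliasing estimate in step (b): one has to bound sums of the form $\sum_{a\ne 0}[l+aM]^{2\alpha}\hat{B}((l+aM)/N)\,\tilde{c}(\cdot)$ without losing powers of $N$, tracking how the negative exponent $2\alpha-(d+1)<-1$ (which makes the $a$-sum converge — this is exactly where $d>2\alpha$ enters), the B-spline decay, the oversampling ratio $M/N$, and the equivalence of the discrete and continuous $L^2$ norms on the finite-dimensional space $V_0 S_N$ all interact, so that the final bound is exactly $M^{-(d+1)+2\alpha}$ with no spurious $N$-growth. Establishing the uniform discrete coercivity (equivalently, unisolvence) of $\langle V_0\cdot,V_0\cdot\rangle_M$ on $S_N$ for $M\ge N\ge N_0$ is a secondary technical point; both are the kind of lengthy Fourier computations one would naturally relegate to an appendix.
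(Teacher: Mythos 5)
Your architecture is sound and the theorem is reachable this way, but it is a genuinely different route from the one the paper (via \cite{maierhoferhuybrechs2021}, whose machinery is reproduced in Appendices A--B) actually takes. The paper does not introduce the exact Bubnov--Galerkin solution $u_N^{(G)}$ at all: it works in the special spline basis $\psi_\mu(x)=\sum_{k\equiv\mu(N)}(\mu/k)^{d+1}e^{2\pi ikx}$, in which the discrete Gram matrix $\langle V_0\psi_\mu,V_0\psi_\nu\rangle_M$ is \emph{exactly diagonal} (for $J=M/N\in\mathbb{N}$), with diagonal entries bounded below by $[\mu]^{4\alpha}$. The solution coefficients are therefore explicit, and both error contributions $M^{2\alpha-(d+1)}$ and $N^{4\alpha-2(d+1)}$ fall out of a single aliasing computation on the right-hand side $\langle V_0\psi_\mu,V_0\tilde v\rangle_M$, combined with the low-frequency projection estimate of Lemma~\ref{lem:app_estimate_sobolev_norm_in_terms_of_low_coef}. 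Your ``alternative route'' $(D+R)\tilde v=D\tilde v_{\mathrm{proj}}$ is essentially this, except that no Neumann series is needed because $R=0$ identically in this basis --- that exact diagonalization is the simplification the paper's proof lives on. What your primary (Strang-lemma) route buys is modularity: the $N^{4\alpha-2(d+1)}$ term becomes off-the-shelf Galerkin superconvergence via Aubin--Nitsche (your duality step is valid, since $4\alpha-(d+1)$ is precisely the lower endpoint of the admissible Sobolev range for the $H^{2\alpha}$-projection, and $2\alpha<d+1/2$ follows from $d>2\alpha$), and only the quadrature perturbation $e_N$ needs fresh analysis.

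The one step you should not wave through is the claim $\|e_N\|_{2\alpha}\leq CM^{2\alpha-(d+1)}\|\tilde u\|_{d+1}$, i.e.\ supercloseness of the oversampled collocation solution to the Galerkin solution \emph{in the energy norm}. This is strictly stronger than what the theorem asserts (which controls only the weak norm $H^{4\alpha-(d+1)}$), and it is exactly where your ``main obstacle'' lives. In the Strang residual $\langle V_0e_N,V_0w\rangle_M-(V_0e_N,V_0w)_0$ with $w=\tilde u-u_N^{(G)}$, the delicate pairs are test frequencies $k=\mu+lN$ with $|l|$ comparable to $J=M/N$ aliased against trial frequencies $k+aM=\mu+jN$ with $|j|$ small but nonzero: there the test coefficient is only damped to $M^{2\alpha-(d+1)}[\mu]^{d+1}|\hat e_\mu|$, the trial coefficient (the spline tail of $u_N^{(G)}$, since $u_N^{(G)}\notin H^{d+1}$ and cannot be absorbed into $\|w\|_{d+1}$) contributes $N^{2\alpha-(d+1)}[\mu]^{d+1}|\widehat{u_N^{(G)}}_\mu|$, and you must check that the two inverse-estimate losses $N^{d+1-2\alpha}$ incurred in converting $[\mu]^{d+1}|\hat e_\mu|$ and $[\mu]^{d+1}|\widehat{u_N^{(G)}}_\mu|$ back to $H^{2\alpha}$ and $H^{d+1}$ quantities cancel exactly against the $N^{2\alpha-(d+1)}$ damping --- they do, but only just, and only because $d>2\alpha$ makes the lattice sums $\sum_{l\neq0}[\mu+lN]^{2\alpha-(d+1)}$ converge. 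If you prefer not to prove the energy-norm supercloseness, the safer variant is to test the Strang identity against the weak norm directly (duality with $\chi_N$ ranging over a bounded set in $H^{2(d+1)-4\alpha}$-approximable data), which is in effect what the paper's weak-norm bookkeeping in Lemma~\ref{lem:app_unperturbed_a_prior_estimate_from_previous_paper} does. Finally, note that both the exact diagonality and your discrete coercivity $\langle V_0\chi_N,V_0\chi_N\rangle_M\gtrsim\|V_0\chi_N\|_0^2$ are established in the cited work only for $M$ an integer multiple of $N$; this restriction is inherited by the theorem as proved there and you would inherit it too.
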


This theorem allows us to draw conclusions on the amount of oversampling required to achieve optimal convergence rates with collocation methods. In particular, it highlights that the fastest rate of convergence is achieved when $M=M(N)=N^2$, and this rate is faster than the conventional Galerkin method for Eq.~\eqref{eqn:general_form_integral_equation}. Of course this is an expensive regime in practice, but even with just linear oversampling $M \sim N$, the theorem shows a clear reduction of the error by a constant factor depending on $d$, $\alpha$ and describes the proportionality constant as we also see in the numerical examples in \S\ref{sec:numerical_experiments}.

Unfortunately, the form Eq.~\eqref{eqn:general_pseudodifferential_form_V_0} is rarely sufficient to explain practical applications. Instead, several important problems can be formulated in terms of integral equations where $V$ takes the following form
\begin{align}\label{eqn:general_form_of_perturbed_integral_operator}
	V=V_0+\mathcal{K},
\end{align}
with $V_0$ as described in Eq.~\eqref{eqn:general_pseudodifferential_form_V_0} and $\mathcal{K}:H^{p}\rightarrow H^{q}$ being continuous for all $p,q\in\mathbb{R}$. We describe two such examples of integral equations next in \S\ref{sec:examples_of_integral_operators_in_this_form}. Our aim in this paper is to extend Thm.~\ref{thm:convergence_rates_equispaced_grids} to integral operators of the form Eq.~\eqref{eqn:general_form_of_perturbed_integral_operator}.
%In the present work we present an extension of Thm.~\ref{thm:convergence_rates_equispaced_grids} to integral operators $V$ of the form Eq.~\eqref{eqn:general_form_of_perturbed_integral_operator}. The statement and proof of this extension are provided in \S\ref{sec:convergence_analysis_of_perturbed_integral_operator}.

%\subsection{Examples of integral operators in the form Eq.~\eqref{eqn:general_form_of_perturbed_integral_operator}}\label{sec:examples_of_integral_operators_in_this_form}
\subsection{Examples of integral operators in the form $V_0+\mathcal{K}$}\label{sec:examples_of_integral_operators_in_this_form}

We summarize some well-known integral formulations which motivate our work. For further details the reader is referred to \cite{sloan_1992} and references therein.

The integral equations described in Eqs.~\eqref{eqn:general_form_integral_equation}-\eqref{eqn:general_form_of_perturbed_integral_operator} appear for instance in indirect methods for the integral formulation of the Dirichlet and Neumann Problem of Laplace's equation. Let $\Omega\subset\mathbb{R}^2$ be a connected open subset, such that $\partial\Omega$ is a $C^\infty$ closed Jordan curve parametrised by $z$, where $z:I\rightarrow \partial\Omega\subset\mathbb{R}^2$ is bijective, infinitely differentiable and with $z'(t)\neq0, \forall t\in I=[0,1)$. Consider the following two boundary value problems: the interior Dirichlet problem for the Laplace equation
\begin{align}\label{eqn:interior_dirichlet_Laplace}
	\begin{cases}
		\Delta \phi=0, &x\in\Omega, \\
		\phi=f, &x\in \partial\Omega,\\
	\end{cases}
\end{align}
and the exterior Neumann problem for the Laplace equation
\begin{align}\label{eqn:exterior_neumann_Laplace}
	\begin{cases}
		\Delta \phi=0, &x\in \mathbb{R}^2\setminus\Omega ,\\
		\partial_n\phi=g, &x\in \partial\Omega,\\
		\phi(x)\rightarrow 0, &\quad |x|\rightarrow\infty.
	\end{cases}
\end{align}

Both of these can be formulated in terms of integral equations of the form Eqs.~\eqref{eqn:general_form_integral_equation}-\eqref{eqn:general_form_of_perturbed_integral_operator} \cite[\S 2]{sloan_1992}. For Eq.~\eqref{eqn:interior_dirichlet_Laplace} we may seek to express
\begin{align}\label{eqn:phi_single_layer_expression}
	\phi(x)=\frac{1}{2\pi}\int_0^1\log|z(t)-x|\, v(t)\,|z'(t)|\dd t,\quad \forall x\in\Omega,
\end{align}
where $v:I\rightarrow \mathbb{C}$ is the so-called `single-layer density'. It is given as the unique function that satisfies
\begin{align}\label{eqn:single_layer_formulation_interior_Dirichlet}
	\int_0^1\frac{1}{2\pi}\log|z(t)-z(s)|\, v(t)\,|z'(t)|\,\dd t=f(z(s)), \quad s\in I.
\end{align}
For the exterior Neuman problem Eq.~\eqref{eqn:general_form_of_perturbed_integral_operator} a common formulation is to express $\phi$ again in the form Eq.~\eqref{eqn:phi_single_layer_expression}, where $u:I\rightarrow \mathbb{C}$ is the unique solution of the following integral equation:
\begin{align}\label{eqn:adjoint_double_layer_formulation_exterior_neumann}
	-\frac{1}{2}v(s) +\frac{1}{2\pi}\int_0^1\frac{n(s)\cdot(z(t)-z(s))}{|z(t)-z(s)|^2} \,v(t)\,|z'(t)|\,\dd t=g(z(s)),\quad\forall s\in I.
\end{align}
Here, $n:I\rightarrow \mathbb{C}^2$ is the unit normal vector to $\partial\Omega$ which points \textit{outward} of $\Omega$. Let us define the new unknown $u(t):=v(t)\,|z'(t)|$. Then Eq.~\eqref{eqn:single_layer_formulation_interior_Dirichlet} is of the form
\begin{align*}
	\mathcal{S}u=f\circ z,
\end{align*}
with $\mathcal{S}$ the single layer operator
\begin{align*}
	\mathcal{S}u (s)=\int_0^1\frac{1}{2\pi}\log|z(t)-z(s)|u(t)\, \dd t.
\end{align*}

It can be seen that the operator $\mathcal{S}$ for an arbitrary $C^\infty$ Jordan curve $\partial\Omega$ behaves largely like the corresponding operator on a circle \cite[pp. 299-300]{sloan_1992}. By extracting this dominant weakly singular part of the kernel in appropriate form (cf. \cite[Eqs.~(3.8)-(3.9)]{sloan_1992}) one then finds that $\mathcal{S}$ takes the form
\begin{align*}
	\mathcal{S}=\mathcal{S}_0+\mathcal{K}_1,
\end{align*}
where $\mathcal{S}_0g(x)=\sum_{m\in\mathbb{Z}}[m]^{-1}\hat{g}_me^{2\pi i m x}$ and $\mathcal{K}_1$ is an integral operator with a $C^{\infty}$ kernel function, hence is continuous as $\mathcal{K}_1:H^{s}\rightarrow H^t$ for any $s,t\in\mathbb{R}$. This means that $\mathcal{S}$ is precisely of the form Eq.~\eqref{eqn:general_form_of_perturbed_integral_operator}. Moreover, it is known that $\mathcal{S}:H^{s-1/2}\rightarrow H^{s+1/2}$ is an isomorphism for any $s\in\mathbb{R}$ so long as the transfinite diameter of the curve $\partial \Omega$ does not equal 1 \cite[Eq.~(4.36)]{sloan_1992}. For more details on this condition we refer the reader to \cite[p. 306]{sloan_1992} and we highlight that in the cases considered in the numerical experiments in \S\ref{sec:numerical_experiments} we have ensured that the transfinite diameter of the relevant boundary is not equal to $1$.

Let us now consider Eq.~\eqref{eqn:adjoint_double_layer_formulation_exterior_neumann}. Defining $u(t):=v(t)\,|z'(t)|$ we can write the equation in the form
\begin{align*}
	\left(-\frac{1}{2}\mathcal{I}+\mathcal{D}^*\right)v(s)=\frac{1}{|z'(s)|}g\circ z(s),\quad\forall s\in I,
\end{align*}
where $\mathcal{I}$ is the identity map and
\begin{align*}
	\mathcal{D}^*u(s)=\frac{1}{2\pi}\int_0^1\frac{n(s)\cdot(z(t)-z(s))}{|z'(s)||z(t)-z(s)|^2} \,u(t)\,\dd t,\quad\forall s\in I.
\end{align*}
We want to demonstrate that $-\frac{1}{2}\mathcal{I}+\mathcal{D}^*$ itself already has the required form of Eq.\eqref{eqn:general_form_of_perturbed_integral_operator}. To that end, let us consider the kernel
\begin{align*}
	k:(s,t)\mapsto \frac{n(s)\cdot(z(t)-z(s))}{|z(t)-z(s)|^2}.
\end{align*}
Because $z$ is infinitely differentiable so is $k$ for any $s\neq t$. By definition of the normal to $\partial\Omega$ we have $n(s)\cdot z'(s)=0$ for all $s\in I$, thus we also have, using Taylor's theorem
\begin{align*}
	n(s)\cdot(z(t)-z(s))=n(s)\cdot((t-s)z'(s)+(t-s)^2a(t,s))=(t-s)^2n(s)\cdot a(t),
\end{align*}
where $a:I\times I\rightarrow \mathbb{R}^2$ is infinitely differentiable. Similarly we have
\begin{align*}
	|z(t)-z(s)|^2=(t-s)^2|z'(t)+(t-s)b(t,s)|^2,
\end{align*}
where $b:I\times I\rightarrow \mathbb{R}^2$ is infinitely differentiable. Since $z'(t)\neq0$ it immediately follows that $k$ is also infinitely differentiable at $s=t$, i.e. $k\in C^\infty(I\times I)$. Thus $\mathcal{D}^*$ is an integral operator with smooth kernel function and hence $\mathcal{D}^*:H^{s}\rightarrow H^t$ is continuous for all $s,t\in\mathbb{R}$. Moreover $-\mathcal{I}/2$ is clearly of the form Eq.~\eqref{eqn:general_pseudodifferential_form_V_0} and so overall $-\mathcal{I}/{2}+\mathcal{D}^*$ takes the form Eq.~\eqref{eqn:general_form_of_perturbed_integral_operator}. Finally, we note that $-\mathcal{I}{2}+\mathcal{D}^*:H^s\rightarrow H^s$ is a continuous isomorphism for any $s\in\mathbb{R}$ \cite[p. 303]{sloan_1992} (see also \cite[\S 13]{Mikhlin1970}).

%\questions{Sloan calls it The classical BIEs of potential theory}

%\questions{Need to ensure that we use the same formulation of the perturbation as in the next section...}
%\questions{\begin{remark}\questions{[Can we find physical examples of this]} According to Sloan \cite{sloan_1992} Other pseudo-differential operators which arise in boundary integral meth- ods are the Cauchy singular integral operator
		%where s and t are taken to be complex numbers and the integral is to be understood in the principal-value sense, which is a pseudo-differential op- erator of order 0 and principal symbol sign ;
		%\end{remark}}
		\section{Convergence analysis for a compactly perturbed integral operator}\label{sec:convergence_analysis_of_perturbed_integral_operator}
		\subsection{Preliminaries and properties of the perturbed system}
		We will now seek to derive a similar estimate to Thm.~\ref{thm:convergence_rates_equispaced_grids} under the assumption that $V$ takes the form $V=V_0+\mathcal{K}$ as introduced in Eqs.~\eqref{eqn:general_pseudodifferential_form_V_0}-\eqref{eqn:general_form_of_perturbed_integral_operator}. It will be helpful to write this perturbation of $V_0$ in the following form
		\begin{align*}
			V=(\mathcal{I}+\tilde{\mathcal{K}})V_0
		\end{align*}
		where $V_0$ is as defined in Eq.~\eqref{eqn:general_pseudodifferential_form_V_0}, $\mathcal{I}+\tilde{\mathcal{K}}:H^{p}\rightarrow H^{p}$ is a continuous isomorphism for all $p\in\mathbb{R}$ and $\tilde{\mathcal{K}}:H^p\rightarrow H^q$ is continuous for any $p,q\in\mathbb{R}$. We arrive at this form simply by defining
		\begin{align*}
			\tilde{\mathcal{K}}:=\mathcal{K}V_0^{-1}.
		\end{align*}
		since, by the pseudodifferential form of $V_0$, the map $\mathcal{K}V_0^{-1}:H^{s}\rightarrow H^{t}$ is still continuous for all $s,t\in\mathbb{R}$, and $V:H^{s+2\alpha}\rightarrow H^{s}$ being invertible for all $s\in\mathbb{R}$ is equivalent to $\mathcal{I}+\mathcal{K}V_0^{-1}:H^{s}\rightarrow H^{s}$ being invertible for all $s\in\mathbb{R}$.
		
		For notational simplicity we will henceforth write $\mathcal{K}$ instead of $\tilde{\mathcal{K}}$. Note that the continuity properties of $\mathcal{K}$ allow us to represent the map $\mathcal{K}$ by its action on the Fourier basis, i.e.\ letting $k_{mn}=\langle\exp(2\pi i n\,\cdot\,),\mathcal{K}\exp(2\pi im\,\cdot\,)\rangle_{L^2}$ we have for any $u\in L^2$
		\begin{align*}
			(\mathcal{K}u)(x)=\sum_{m\in\mathbb{Z}}\sum_{n\in\mathbb{Z}}k_{mn}\hat{u}_me^{2\pi i n x},
		\end{align*}
		and the series converges absolutely uniformly, since (by continuity of $\mathcal{K}:H^{p}\rightarrow H^{q},\,\forall p,q\in\mathbb{R}$) for every $p,q\in\mathbb{R}$ there is $C_{p,q}>0$ such that
		\begin{align}\label{eqn:decay_of_K_coefficients}
			|k_{mn}|\leq C_{p,q}(1+|m|)^{-p}(1+|n|)^{-q}, \quad \forall\, m,n\in\mathbb{Z}.
		\end{align}
		Similarly, $\mathcal{K}^*$ is represented by the conjugate transpose of these values, i.e. 
		\begin{align*}
			\langle\exp(2\pi i n\,\cdot\,),\mathcal{K}^*\exp(2\pi im\,\cdot\,)\rangle_{L^2}=\overline{k_{nm}}
		\end{align*}
		where $\overline{y}$ denotes the complex conjugate of $y\in\mathbb{C}$.
		
		\subsection{The perturbed orthogonality conditions and convergence result}
		Thus for an integral operator $V$ of the form Eq.~\eqref{eqn:general_form_of_perturbed_integral_operator} the oversampled collocation method gives rise to the following set of discrete orthogonality conditions which uniquely determines $u_N^{(M)}$: $\forall\chi_N\in S_N$
		\begin{align}\label{eqn:discrete_bubnov_compact_perturbation_orthogonality_conditions}
			\left\langle (\mathcal{I}+\mathcal{K})V_0\chi_N,(\mathcal{I}+\mathcal{K})V_0u_N^{(M)}\right\rangle_M=	\left\langle (\mathcal{I}+\mathcal{K})V_0\chi_N, (\mathcal{I}+\mathcal{K})V_0{u}\right\rangle_M,
		\end{align}
		where for notational simplicity we again wrote $\mathcal{K}$ instead of $\tilde{\mathcal{K}}$. Ultimately we will prove the following result, which extends the conclusions of Thm.~\ref{thm:convergence_rates_equispaced_grids} to certain compact perturbations:
		\begin{theorem}\label{thm:compact_perturbation_of_discrete_bubnov_galerkin}
			If $u_N^{(M)}\in S_N$ is such that $\forall \chi_N\in S_N$
			\begin{align*}%\label{eqn:discrete_bubnov_compact_perturbation_orthogonality_conditions}
				\left\langle (\mathcal{I}+\mathcal{K})V_0\chi_N,(\mathcal{I}+\mathcal{K})V_0u_N^{(M)}\right\rangle_M=	\left\langle (\mathcal{I}+\mathcal{K})V_0\chi_N, (\mathcal{I}+\mathcal{K})V_0{u}\right\rangle_M,
			\end{align*}
			where $V_0$ takes the form Eq.~\eqref{eqn:general_pseudodifferential_form_V_0}, $\mathcal{I}+\mathcal{K}:H^p\rightarrow H^p$ is a continuous isomorphism for all $p\in\mathbb{R}$ and $\mathcal{K}:H^{p}\rightarrow H^{q}$ is continuous for all $p,q\in\mathbb{R}$, then there are constants $N_0,C>0$ independent of $N,{u},M$ such that, for all $M\geq N\geq N_0$,
			\begin{align}\label{eqn4:full_error_estimate_final_goal}
				\|u_N^{(M)}-{u}\|_{2\alpha-(d+1)}\leq C(M^{2\alpha-(d+1)}+N^{4\alpha-2(d+1)})\|{u}\|_{d+1}.
			\end{align}
		\end{theorem}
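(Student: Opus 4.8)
The plan is to recast the perturbed discrete orthogonality conditions as a statement about a discrete least-squares projection onto a \emph{perturbed} trial/test space, and then to run the duality argument of Theorem~\ref{thm:convergence_rates_equispaced_grids} on that space. The starting point is the observation that, since $(\mathcal{I}+\mathcal{K})V_0 u=Vu=f$, the hypothesis of the theorem says exactly that $(\mathcal{I}+\mathcal{K})V_0u_N^{(M)}$ is the $\langle\cdot,\cdot\rangle_M$-orthogonal projection of $f$ onto the $N$-dimensional space $W_N:=(\mathcal{I}+\mathcal{K})V_0 S_N$. Writing $P_N^M$ for that projection, this yields the identity
\[
u_N^{(M)}-u = V_0^{-1}(\mathcal{I}+\mathcal{K})^{-1}\bigl(P_N^M f-f\bigr),
\]
and since $V_0^{-1}(\mathcal{I}+\mathcal{K})^{-1}\colon H^{-(d+1)}\to H^{2\alpha-(d+1)}$ is bounded --- this is where the shift by $2\alpha$ in the norm index comes from --- it suffices to bound $\|P_N^M f-f\|_{-(d+1)}$ by the right-hand side of Eq.~\eqref{eqn4:full_error_estimate_final_goal}. (Equivalently, and closer in spirit to \cite{maierhoferthesis2021}, one may introduce the unperturbed discrete solution $\tilde u_N^{(M)}\in S_N$ fixed by $\langle V_0\chi_N,V_0(\tilde u_N^{(M)}-u)\rangle_M=0$, which Theorem~\ref{thm:convergence_rates_equispaced_grids} controls, and estimate $u_N^{(M)}-\tilde u_N^{(M)}$; the two routes require the same ingredients.)

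The estimate of $\|P_N^M f-f\|_{-(d+1)}$ then splits into four pieces, each an adaptation of the corresponding step of Theorem~\ref{thm:convergence_rates_equispaced_grids} to the space $W_N$. First, $\langle\cdot,\cdot\rangle_M$ is non-degenerate on $W_N$ for all $M\ge N\ge N_0$, so that $P_N^M$ and hence $u_N^{(M)}$ are well-defined: this is inherited from the corresponding property of $V_0S_N$ established in \cite{maierhoferthesis2021}, by splitting an element of $V_0S_N$ into its low- and high-frequency parts and using that $\mathcal{K}$ damps high modes super-algebraically (Eq.~\eqref{eqn:decay_of_K_coefficients}) while behaving on the fixed low-frequency part like the compact operator it is, together with the invertibility of $\mathcal{I}+\mathcal{K}$. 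Second, optimality of $P_N^M$ in the $\langle\cdot,\cdot\rangle_M$-seminorm gives a primal best-approximation bound: taking the comparison element $(\mathcal{I}+\mathcal{K})V_0v_N\in W_N$ with $v_N$ a quasi-interpolant of $u$ and peeling off the super-smooth contribution $\mathcal{K}V_0(u-v_N)$ shows that $W_N$ retains the approximation power of $V_0S_N$ up to strictly higher-order corrections. Third, an Aubin--Nitsche duality step upgrades this to the weak norm $\|\cdot\|_{2\alpha-(d+1)}$: for $\phi\in H^{d+1}$ one writes $\langle\phi,P_N^M f-f\rangle_{L^2}=\langle\phi-\psi_N,P_N^M f-f\rangle_{L^2}+\bigl(\langle\psi_N,P_N^M f-f\rangle_{L^2}-\langle\psi_N,P_N^M f-f\rangle_M\bigr)$ with $\psi_N=(\mathcal{I}+\mathcal{K})V_0w_N\in W_N$ and $w_N$ a quasi-interpolant of $\bigl((\mathcal{I}+\mathcal{K})V_0\bigr)^{-1}\phi$, using the invertibility of $\mathcal{I}+\mathcal{K}$ to control the regularity of $\bigl((\mathcal{I}+\mathcal{K})V_0\bigr)^{-1}\phi$; this contributes the second factor that produces the $N^{4\alpha-2(d+1)}$ term, exactly as in Theorem~\ref{thm:convergence_rates_equispaced_grids}. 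Fourth, the two quadrature discrepancies between $\langle\cdot,\cdot\rangle_M$ and $\langle\cdot,\cdot\rangle_{L^2}$ are estimated through the aliasing identity for the $M$-point equispaced rule: all contributions involving $\mathcal{K}$ are negligible because of the super-algebraic decay Eq.~\eqref{eqn:decay_of_K_coefficients}, and the remaining ones reproduce precisely the $M^{2\alpha-(d+1)}$ term of Theorem~\ref{thm:convergence_rates_equispaced_grids}. Combining the four pieces gives Eq.~\eqref{eqn4:full_error_estimate_final_goal}; the technically heavier parts of the second, third and fourth steps are collected in \ref{app:proof_of_central_a_priori_estimate} as the central a~priori estimate, and the aliasing bounds and the approximation properties of $W_N$ are proved as lemmas in \ref{sec:app_proof_of_lemmas}.

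The main obstacle is conceptual as much as technical: because $\mathcal{K}$ perturbs the test and trial \emph{spaces} rather than merely adding a compact term to the equation, the classical device for passing quadrature--Galerkin convergence results to compact perturbations \cite[\S3]{Arnold1985} does not apply directly, and one must control the ``curved'' $N$-dimensional space $W_N=(\mathcal{I}+\mathcal{K})V_0S_N$ simultaneously in three respects --- its Sobolev approximation properties, the uniform non-degeneracy of $\langle\cdot,\cdot\rangle_M$ on it, and the duality structure responsible for the superconvergent $N$-exponent --- while verifying that every term generated by $\mathcal{K}$ is of strictly lower order. The last point is the delicate one precisely because $\mathcal{K}$ is only \emph{smoothing}, not small: its operator norm on the relevant spaces does not decay with $N$, so one genuinely has to exploit the super-algebraic decay Eq.~\eqref{eqn:decay_of_K_coefficients} rather than a Neumann-series/smallness argument, and to do so uniformly in the two independent parameters $M$ and $N$. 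Making the duality step of the third piece go through on $W_N$ rather than on $S_N$, with all the $\mathcal{K}$-corrections tracked uniformly, is where the bulk of the work lies.
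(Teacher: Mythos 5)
Your outline takes a genuinely different route from the paper. You view the scheme as a discrete least-squares projection $P_N^M$ onto the ``curved'' space $W_N=(\mathcal{I}+\mathcal{K})V_0S_N$ and propose to redo the whole convergence analysis (stability, best approximation, duality, aliasing) on $W_N$. The paper instead never touches $W_N$ directly: it introduces the bilinear form $\epsilon(\chi_N,\cdot)=\langle V_0\chi_N,\mathcal{K}^*V_0\cdot\rangle_M-\langle \mathcal{K}V_0\chi_N,V_0\cdot\rangle_M$ measuring the failure of $\mathcal{K}$ to have a discrete adjoint, proves one a~priori estimate (Prop.~\ref{prop:perturbed_orthogonality_conditions}) for the \emph{unperturbed} system with an $\epsilon$-forcing term, and then removes the test-side and trial-side perturbations in two successive absorption steps (Claim~\ref{claim:discrete_orthogonality_conditions_perturbed_test_functions}, then the theorem). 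The decisive structural feature of the paper's argument is that the $\epsilon$-forcing in Prop.~\ref{prop:perturbed_orthogonality_conditions} is evaluated at $b-c_N$ where $c_N$ is essentially the unknown error itself, and the resulting contribution carries an explicit factor $N^{-1}$ in the weak norm, so it can be absorbed into the left-hand side for large $N$; well-posedness for $M\ge N\ge N_0$ falls out of the same inequality and is never proved separately.

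This is where your outline has a genuine gap: your first step asserts uniform (in $N$ and $M$) non-degeneracy of $\langle\cdot,\cdot\rangle_M$ on $W_N$, ``inherited'' from $V_0S_N$ via a low/high-frequency splitting. That inheritance is exactly the hard part. On $V_0S_N$ the discrete Gram matrix in the basis $\psi_\mu$ is diagonal with entries bounded below by $[\mu]^{4\alpha}$; on $W_N$ it is full, and since $\mathcal{K}$ is only smoothing, not small, no Neumann-series or naive perturbation of that lower bound is available. A compactness argument is also delicate because both the space $W_N$ and the bilinear form $\langle\cdot,\cdot\rangle_M$ vary with the two independent parameters. To close your argument you would have to quantify the splitting you describe, and once you do, you will find yourself producing precisely an inequality of the form $(1-o(1))\|e\|\le \mathrm{RHS}$ --- i.e.\ the paper's absorption step in disguise. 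Your step 4 is also stated too optimistically: the low-frequency part of $\mathcal{K}$ is not negligible; in the paper it disappears only through an \emph{exact} cancellation between the two terms of $\epsilon$ (equivalently, because non-aliased frequencies contribute nothing to the quadrature discrepancy), and the remaining aliased terms require both Eq.~\eqref{eqn:decay_of_K_coefficients} and the inverse estimates for smoothest splines. Finally, note the norm index: your reduction lands in $\|\cdot\|_{2\alpha-(d+1)}$, which matches the theorem as printed, but the paper's proof (and its numerical verification) actually produces the weaker index $4\alpha-(d+1)$ of Thm.~\ref{thm:convergence_rates_equispaced_grids}; for $\alpha<0$ these differ, so be careful not to claim the stronger norm unless your duality step genuinely delivers it.
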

		%\questions{[From this we immediately have the following corollary
			%\begin{corollary}
			%Here show how the error estimates extend to optimal rates in slightly higher Sobolev norms...
			%\end{corollary}]}
			
			The literature (cf. \cite[\S3]{Arnold1985}) offers a standard procedure to extend asymptotic error estimates of the form in Thm.~\ref{thm:convergence_rates_equispaced_grids} to the case when only the right hand side of the integral operator in the orthogonality conditions is perturbed, i.e. if $\tilde{u}_N^{(M)}$ would satisfy
			\begin{align*}
				\left\langle V_0\chi_N,(\mathcal{I}+\mathcal{K})V_0\tilde{u}_N^{(M)}\right\rangle_M=	\left\langle V_0\chi_N, (\mathcal{I}+\mathcal{K})V_0{u}\right\rangle_M,\quad\forall \chi_N\in S_N.
			\end{align*}
			A detailed description of the argument can also be found in \cite[Appendix~D]{maierhoferhuybrechs2021}. Thus it is suggestive to attempt to find a way to take the `discrete adjoint' of $\mathcal{K}$ with respect to $\langle\cdot,\cdot\rangle_M$. Specifically, we would like to formulate the orthogonality conditions Eq.~\eqref{eqn:discrete_bubnov_compact_perturbation_orthogonality_conditions} in a form similar to
			\begin{align}\label{eqn:wishful_discrete_adjoint_equations}
				\left\langle V_0\chi_N,(\mathcal{I}+\mathcal{K}^*)(\mathcal{I}+\mathcal{K})V_0u_N^{(M)}\right\rangle_M \stackrel{?}{=}	\left\langle V_0\chi_N, (\mathcal{I}+\mathcal{K}^*)(\mathcal{I}+\mathcal{K})V_0{u}\right\rangle_M,
			\end{align}
			$\forall \chi_N\in S_N,$ where by $\mathcal{K}^*$ we have denoted the continuous adjoint map corresponding to $\mathcal{K}$, which is a continuous map $\mathcal{K}^*:H^{-q}\rightarrow H^{-p}$ for all $p,q\in\mathbb{R}$. We note that Eq.~\eqref{eqn:wishful_discrete_adjoint_equations} would be exactly equivalent to Eq.~\eqref{eqn:discrete_bubnov_compact_perturbation_orthogonality_conditions} if we were to replace $\langle\cdot,\cdot\rangle_M$ by the exact $L^2$-inner product $\langle\cdot,\cdot\rangle_{L^2}$. However, the discrete nature of $\langle\cdot,\cdot\rangle_M$ prevents this exact equivalence, and so we need to find a way to account for the error incurred in $u_{N}^{(M)}$ when we choose to solve Eq.~\eqref{eqn:wishful_discrete_adjoint_equations} instead of Eq.~\eqref{eqn:discrete_bubnov_compact_perturbation_orthogonality_conditions}.
			
			In order to do so let us introduce the following bilinear form $\epsilon$:
			\begin{align*}
				\epsilon(\chi_N,\tilde{b}):=\left\langle V_0\chi_N,\mathcal{K}^* V_0\tilde{b}\right\rangle_M-\left\langle \mathcal{K}V_0\chi_N,V_0\tilde{b}\right\rangle_M.
			\end{align*}
			
			Using this bilinear form we can reformulate the full perturbed orthogonality conditions Eq.~\eqref{eqn:discrete_bubnov_compact_perturbation_orthogonality_conditions} in the following equivalent form: $\forall\chi_N\in S_N$
			\begin{align*}
				\left\langle V_0\chi_N,(\mathcal{I}+\mathcal{K}^*)(\mathcal{I}+\mathcal{K})V_0u_N^{(M)}\right\rangle_M&= \left\langle V_0\chi_N,(\mathcal{I}+\mathcal{K}^*)(\mathcal{I}+\mathcal{K})V_0u\right\rangle_M\\
				&\quad+\epsilon\left(\chi_N,V_0^{-1}(\mathcal{I}+\mathcal{K})V_0\left(u_N^{(M)}-u\right)\right).
			\end{align*}
			\subsection{Proof of the new convergence result for the perturbed operator}
			With this formulation we can now try to account for the error incurred when the orthogonality conditions are perturbed using $\epsilon$. The following is the central new a-priori estimate facilitating the proof of Thm.~\ref{thm:compact_perturbation_of_discrete_bubnov_galerkin}.
			\begin{proposition}\label{prop:perturbed_orthogonality_conditions}
				Suppose $a_N^{(M)}\in S_N$ satisfies 
				\begin{align}\label{eqn:unperturbed_apriori_orhtogonality_condition1}
					\left\langle V_0\chi_N,V_0a_N^{(M)}\right\rangle_M=\left\langle V_0\chi_N,V_0\tilde{v}\right\rangle_M+\epsilon(\chi_N,b-c_N),\ \quad\forall \chi_N\in S_N,
				\end{align}
				for some $\tilde{v},b\in H^{d+1}$ and a sequence of smoothest splines $c_N\in S_N$, $N\in\mathbb{N}$. Then there is a constant $C>0$ independent of $N,M,\tilde{v},b,c_N$ such that for all $M\geq N>0$:
				\begin{align}\begin{split}\label{eqn:proposition_central_estimate}
						\|\tilde{v}-a_N^{(M)}\|_{4\alpha-(d+1)}&\leq C \left(M^{2\alpha-(d+1)}+N^{4\alpha-2(d+1)}\right)\left(\|\tilde{v}\|_{d+1}+\|b\|_{d+1}\right)\\
						&\quad\quad +CN^{-1}\|b-c_N\|_{4\alpha-(d+1)}.\end{split}
				\end{align}
			\end{proposition}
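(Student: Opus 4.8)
The plan is to reduce \eqref{eqn:proposition_central_estimate} to the unperturbed estimate of Thm.~\ref{thm:convergence_rates_equispaced_grids} together with a quantitative control of the perturbation functional $\chi_N\mapsto\epsilon(\chi_N,b-c_N)$. First I would exploit linearity: let $\hat a_N\in S_N$ be the \emph{unperturbed} oversampled collocation approximant of $\tilde v$, i.e.\ the solution of $\langle V_0\chi_N,V_0\hat a_N\rangle_M=\langle V_0\chi_N,V_0\tilde v\rangle_M$ for all $\chi_N\in S_N$ (well defined for $M\ge N\ge N_0$ by the invertibility that underlies Thm.~\ref{thm:convergence_rates_equispaced_grids}; the finitely many small $N$ are absorbed in the constant). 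Then $q_N:=a_N^{(M)}-\hat a_N\in S_N$ satisfies the homogeneous problem $\langle V_0\chi_N,V_0 q_N\rangle_M=\epsilon(\chi_N,b-c_N)$ for all $\chi_N\in S_N$, while Thm.~\ref{thm:convergence_rates_equispaced_grids} applied with $\tilde u=\tilde v$ already gives $\|\tilde v-\hat a_N\|_{4\alpha-(d+1)}\le C(M^{2\alpha-(d+1)}+N^{4\alpha-2(d+1)})\|\tilde v\|_{d+1}$. By the triangle inequality it thus remains to bound $\|q_N\|_{4\alpha-(d+1)}$ by the right-hand side of \eqref{eqn:proposition_central_estimate}.

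The key structural observation is that $\epsilon(\chi_N,w)$ is a pure \emph{quadrature-error} functional. Since the continuous adjoint identity $\langle V_0\chi_N,\mathcal{K}^*V_0 w\rangle_{L^2}=\langle\mathcal{K}V_0\chi_N,V_0 w\rangle_{L^2}$ holds exactly, $\epsilon(\chi_N,w)$ is the difference of $\langle\cdot,\cdot\rangle_M$ and $\langle\cdot,\cdot\rangle_{L^2}$ evaluated on the pairs $(V_0\chi_N,\mathcal{K}^*V_0 w)$ and $(\mathcal{K}V_0\chi_N,V_0 w)$, hence a sum over aliased Fourier frequencies. The decay \eqref{eqn:decay_of_K_coefficients} makes both $\mathcal{K}V_0\chi_N$ and $\mathcal{K}^*V_0 w$ infinitely smooth with every Sobolev norm controlled by a single, arbitrarily negative, Sobolev norm of the argument; in particular $\|\mathcal{K}V_0\chi_N\|_t\le C_t\|\chi_N\|_{2\alpha}$ and $\|\mathcal{K}^*V_0 w\|_t\le C_t\|w\|_{4\alpha-(d+1)}$ for every $t$. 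I would feed this into the Fourier expression of the aliasing sum, using that for $\psi_N\in S_N$ the Fourier coefficients factor through the B-spline symbol, whose sinc-type decay governs the tails beyond $|m|\sim N$, and thereby estimate $|\epsilon(\chi_N,w)|$.

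To produce the two distinct contributions on the right of \eqref{eqn:proposition_central_estimate} I would split $w=b-c_N=(b-\Pi_N b)+(\Pi_N b-c_N)$, where $\Pi_N\colon H^{d+1}\to S_N$ is a quasi-optimal projection of order $d+1$. For the first piece the approximation error satisfies $\|b-\Pi_N b\|_s\lesssim N^{s-(d+1)}\|b\|_{d+1}$ for $s\le d+1$, and the aliasing estimate then delivers the rate $M^{2\alpha-(d+1)}+N^{4\alpha-2(d+1)}$ against $\|b\|_{d+1}$. For the second piece $\Pi_N b-c_N\in S_N$, so a spline-to-spline aliasing estimate applies; combined with $\|\Pi_N b-c_N\|_{4\alpha-(d+1)}\le\|b-\Pi_N b\|_{4\alpha-(d+1)}+\|b-c_N\|_{4\alpha-(d+1)}\lesssim N^{4\alpha-2(d+1)}\|b\|_{d+1}+\|b-c_N\|_{4\alpha-(d+1)}$, it yields the remaining term $N^{-1}\|b-c_N\|_{4\alpha-(d+1)}$ (the $N^{4\alpha-2(d+1)}\|b\|_{d+1}$ part being absorbed by the main rate). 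Finally, to measure $q_N$ in the weak norm $\|\cdot\|_{4\alpha-(d+1)}$ rather than the energy norm $\|\cdot\|_{2\alpha}$, I would combine the coercivity of $\langle V_0\cdot,V_0\cdot\rangle_M$ on $S_N$ for $M\ge N\ge N_0$ (part of the machinery behind Thm.~\ref{thm:convergence_rates_equispaced_grids}) with an Aubin--Nitsche duality argument, arranging matters so that the auxiliary dual solution interacts with $\epsilon$ only through a well-resolved spline, which is exactly the situation in which the above aliasing estimates give the sharp rates.

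The hard part will be this sharp aliasing estimate for $\epsilon$: controlling the interaction of the non-bandlimited spline arguments with the discrete inner product well enough to extract precisely the powers $M^{2\alpha-(d+1)}$, $N^{4\alpha-2(d+1)}$ and the factor $N^{-1}$, rather than the cruder bounds a naive Cauchy--Schwarz would produce. This is the lengthiest component of the argument, and I would relegate the bookkeeping to \ref{app:proof_of_central_a_priori_estimate}, with the supporting lemmas in \ref{sec:app_proof_of_lemmas}, as announced in the introduction.
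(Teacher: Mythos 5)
Your overall strategy coincides with the paper's: you split $a_N^{(M)}$ into the unperturbed approximant $\hat a_N$ of $\tilde v$ (handled by the earlier a-priori estimate) plus a remainder $q_N$ driven by $\epsilon(\cdot,b-c_N)$; you identify $\epsilon$ as a pure aliasing functional because the continuous adjoint identity makes the non-aliased Fourier contributions cancel exactly; you exploit the decay \eqref{eqn:decay_of_K_coefficients} to control the aliased tails; and you separate the $\|b\|_{d+1}$ and $N^{-1}\|b-c_N\|_{4\alpha-(d+1)}$ contributions by inserting a quasi-optimal spline approximant of $b$. All of these are exactly the ingredients of Lemmas~\ref{lem:app_estimate_sobolev_norm_in_terms_of_low_coef}--\ref{lem:app_estimate_size_of_epsilon_terms}.

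The one place where you genuinely diverge is the passage from the discrete orthogonality conditions for $q_N$ to the weak-norm bound $\|q_N\|_{4\alpha-(d+1)}$. You propose coercivity of $\langle V_0\cdot,V_0\cdot\rangle_M$ on $S_N$ followed by an Aubin--Nitsche duality argument. The paper instead works in the basis $\psi_\mu$, $\mu\in\Lambda_N$, in which the discrete Gram matrix $\langle V_0\psi_\mu,V_0\psi_\nu\rangle_M$ is \emph{diagonal} with entries bounded below by $[\mu]^{4\alpha}$; the homogeneous problem is then solved coefficient-by-coefficient, giving $|a_\mu-a_\mu'|\leq[\mu]^{-4\alpha}|\epsilon(\psi_\mu,b-c_N)|$ and hence the weighted $\ell^2$ sum $\sum_\mu[\mu]^{-2(d+1)}|\epsilon(\psi_\mu,b-c_N)|^2$ directly in the target norm, with no duality needed. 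This diagonalization also buys the result for all $M\geq N>0$ rather than only $N\geq N_0$. Your duality step is the part I would press on: $\epsilon(\chi_N,\cdot)$ involves the discrete bilinear form in \emph{both} of its terms, so the dual test function is not a spline and does not obviously ``interact with $\epsilon$ only through a well-resolved spline'' as you assert; making that rigorous would likely force you back to a frequency-by-frequency analysis equivalent to the diagonal-basis computation. As an outline your proposal is sound and reaches the right exponents, but that final step is under-specified relative to the rest.
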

			\begin{proof}
				The proof of this estimate is presented in \ref{app:proof_of_central_a_priori_estimate}. It relies essentially on the observation that the discrete bilinear form $\left\langle\,\cdot\,,\,\cdot\,\right\rangle_M$ results in aliasing whereby the low-frequency terms correspond to the exact $L^2$-inner product on $I$ and hence cancel exactly in the contribution of the bilinear form $\epsilon(\,\cdot\,,\,\cdot\,)$. The high-frequency terms can be bounded, by exploiting the continuity properties of the operator $\mathcal{K}$, by the term $CN^{-1}\|b-c_N\|_{4\alpha-(d+1)}$ resulting in the estimate Eq.~\eqref{eqn:proposition_central_estimate}.
			\end{proof}
			
			We can now use Prop.~\ref{prop:perturbed_orthogonality_conditions} to prove Thm.~\ref{thm:compact_perturbation_of_discrete_bubnov_galerkin} in a manner similar to the perturbation argument of \cite[\S3]{Arnold1985}.
			
			\begin{proof}[Proof of Thm.~\ref{thm:compact_perturbation_of_discrete_bubnov_galerkin}]
				We proceed in two steps: Firstly we show that a perturbation of the test functions $V_0\chi_N\mapsto (\mathcal{I}+\mathcal{K})V_0\chi_N$ yields a similar error estimate as in Thm.~\ref{thm:convergence_rates_equispaced_grids} and then we proceed to perturb the operator $V_0$ on the right hand side of the orthogonality conditions.
				\begin{claim}\label{claim:discrete_orthogonality_conditions_perturbed_test_functions}
					Suppose $a_N^{(M)}\in S_N$ satisfies
					\begin{align}\label{eqn:discrete_orthogonality_conditions_perturbed_test_functions}
						\left\langle (\mathcal{I}+\mathcal{K})V_0\chi_N,V_0a_N^{(M)}\right\rangle_M=\left\langle (\mathcal{I}+\mathcal{K})V_0\chi_N,V_0\tilde{a}\right\rangle_M,\quad \forall \chi_N\in S_N,
					\end{align}
					where $\mathcal{K}$ satisfies the assumptions of Thm.~\ref{thm:compact_perturbation_of_discrete_bubnov_galerkin}. Then there are constants $C, N_0>0$ independent of $\tilde{a},a_N^{(M)},M,N$ such that for $M\geq N\geq N_0$:
					\begin{align}\label{eqn:discrete_orthogonality_conditions_perturbed_test_functions_estimate}
						\|a_N^{(M)}-\tilde{a}\|_{4\alpha-(d+1)}&\leq C(M^{2\alpha-(d+1)}+N^{4\alpha-2(d+1)})\|\tilde{a}\|_{d+1}.
					\end{align}
				\end{claim}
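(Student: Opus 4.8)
The plan is to absorb the perturbation of the test functions into the right-hand side of the orthogonality conditions and thereby reduce Eq.~\eqref{eqn:discrete_orthogonality_conditions_perturbed_test_functions} to exactly the situation of Prop.~\ref{prop:perturbed_orthogonality_conditions}. Writing $e_N := a_N^{(M)}-\tilde a$ and using the definition of $\epsilon$ to replace $\langle \mathcal{K}V_0\chi_N, V_0 g\rangle_M$ by $\langle V_0\chi_N,\mathcal{K}^*V_0 g\rangle_M - \epsilon(\chi_N,g)$ (applied once with $g=a_N^{(M)}$ and once with $g=\tilde a$, then subtracting and using bilinearity of $\epsilon$ in its second argument), the conditions become, for all $\chi_N\in S_N$,
\[
\langle V_0\chi_N, V_0 a_N^{(M)}\rangle_M = \big\langle V_0\chi_N, V_0(\tilde a - V_0^{-1}\mathcal{K}^* V_0 e_N)\big\rangle_M + \epsilon(\chi_N, e_N).
\]
This is precisely of the form required in Prop.~\ref{prop:perturbed_orthogonality_conditions} with $\tilde v := \tilde a - V_0^{-1}\mathcal{K}^*V_0 e_N$, $b := -\tilde a\in H^{d+1}$, and $c_N := -a_N^{(M)}\in S_N$, so that $b-c_N = e_N$. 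Since $\mathcal{K}^*$ is infinitely smoothing and $V_0$ is pseudodifferential, $V_0^{-1}\mathcal{K}^*V_0$ maps any Sobolev space continuously into $H^{d+1}$, so $\tilde v\in H^{d+1}$ is well defined.

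Applying Prop.~\ref{prop:perturbed_orthogonality_conditions} then gives, with $\delta_N := M^{2\alpha-(d+1)}+N^{4\alpha-2(d+1)}$,
\[
\|\tilde v - a_N^{(M)}\|_{4\alpha-(d+1)} \le C\delta_N\big(\|\tilde v\|_{d+1}+\|\tilde a\|_{d+1}\big) + CN^{-1}\|e_N\|_{4\alpha-(d+1)}.
\]
Next I would identify $\tilde v - a_N^{(M)} = -(\mathcal{I}+V_0^{-1}\mathcal{K}^*V_0)e_N = -V_0^{-1}(\mathcal{I}+\mathcal{K}^*)V_0 e_N$, and note that $V_0^{-1}(\mathcal{I}+\mathcal{K}^*)V_0$ is a continuous isomorphism on every $H^p$: indeed $\mathcal{I}+\mathcal{K}^*$ is the $L^2$-adjoint of the isomorphism $\mathcal{I}+\mathcal{K}$ and hence itself an isomorphism on each $H^p$, and conjugation by the isomorphism $V_0$ preserves this. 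Therefore $\|e_N\|_{4\alpha-(d+1)} \le C_1\|\tilde v - a_N^{(M)}\|_{4\alpha-(d+1)}$ for a fixed constant $C_1$. Moreover, again using that $V_0^{-1}\mathcal{K}^*V_0 : H^{4\alpha-(d+1)}\to H^{d+1}$ is bounded, $\|\tilde v\|_{d+1}\le \|\tilde a\|_{d+1} + C_2\|e_N\|_{4\alpha-(d+1)}$. Substituting these two bounds into the displayed estimate produces an inequality of the shape
\[
\|e_N\|_{4\alpha-(d+1)} \le C_3\,\delta_N\,\|\tilde a\|_{d+1} + C_3(\delta_N + N^{-1})\,\|e_N\|_{4\alpha-(d+1)}.
\]

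The final step, which is where the hypothesis $M\ge N\ge N_0$ enters, is a routine bootstrap: because the exponents $2\alpha-(d+1)$ and $4\alpha-2(d+1)$ are negative (the consistency condition) and $M\ge N$, one has $\delta_N\le 2N^{2\alpha-(d+1)}\to 0$, so there is $N_0$ with $C_3(\delta_N+N^{-1})\le \tfrac{1}{2}$ for all $N\ge N_0$; absorbing the $\|e_N\|$-term on the right then yields $\|e_N\|_{4\alpha-(d+1)}\le 2C_3\delta_N\|\tilde a\|_{d+1}$, which is exactly Eq.~\eqref{eqn:discrete_orthogonality_conditions_perturbed_test_functions_estimate}. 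I expect the main obstacle to be the circularity introduced by this ``discrete adjoint'' manipulation: both the modified right-hand side $\tilde v$ and the spline $c_N$ fed into Prop.~\ref{prop:perturbed_orthogonality_conditions} depend on the unknown error $e_N$ itself, so the estimate only closes because the infinite smoothing of $\mathcal{K}$, together with the invertibility of $\mathcal{I}+\mathcal{K}^*$ and the asymptotic smallness of $\delta_N$ and $N^{-1}$, exactly pays for the self-referential correction terms; verifying that the Sobolev indices genuinely line up so that $\mathcal{K}$'s smoothing dominates is the delicate point.
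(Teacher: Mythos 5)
Your proposal is correct and follows essentially the same route as the paper: the same rewriting of the perturbed test-function conditions via the bilinear form $\epsilon$ into the form required by Prop.~\ref{prop:perturbed_orthogonality_conditions} (with $\tilde v$ and $b-c_N$ depending on the unknown error), the same identification of $\tilde v - a_N^{(M)}$ with $-V_0^{-1}(\mathcal{I}+\mathcal{K}^*)V_0 e_N$, and the same absorption of the self-referential terms for $N\geq N_0$. The only cosmetic difference is that you justify the invertibility of $\mathcal{I}+\mathcal{K}^*$ by duality, which the paper merely asserts.
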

				\begin{subproof}[Proof of Claim~\ref{claim:discrete_orthogonality_conditions_perturbed_test_functions}]
					To begin with we note that Eq.~\eqref{eqn:discrete_orthogonality_conditions_perturbed_test_functions} is equivalent to: $\forall\chi_N\in S_N$:
					\begin{align*}
						\left\langle V_0\chi_N,(\mathcal{I}+\mathcal{K}^*)V_0a_N^{(M)}\right\rangle_M=\left\langle V_0\chi_N,(\mathcal{I}+\mathcal{K}^*)V_0\tilde{a}\right\rangle_M+\epsilon(\chi_N,a_N^{(M)}-\tilde{a}).
					\end{align*}
					This can be equivalently written as: $\forall \chi_N\in S_N$:
					\begin{align*}
						\left\langle\hspace{-0.05cm} V_0\chi_N,V_0a_N^{(M)}\hspace{-0.05cm}\right\rangle_M\hspace{-0.1cm}&=\hspace{-0.025cm}\left\langle V_0\chi_N,V_0\left(\tilde{a}+V_0^{-1}\mathcal{K}^* V_0(\tilde{a}-a_N^{(M)})\right)\right\rangle_M\hspace{-0.075cm}+\epsilon(\chi_N,a_N^{(M)}-\tilde{a}).
					\end{align*}
					Therefore, Prop.~\ref{prop:perturbed_orthogonality_conditions} applies and shows that
					\begin{align*}
						\|V_0^{-1}(\mathcal{I}+\mathcal{K}^*)& V_0(\tilde{a}-a_N^{(M)})\|_{4\alpha-(d+1)}\\&\leq C\left(M^{2\alpha-(d+1)}+N^{4\alpha-2(d+1)}\right)\|\tilde{a}+V_0^{-1}\mathcal{K}^* V_0(\tilde{a}-a_N^{(M)})\|_{d+1}\\
						&\quad\quad +C\left(M^{2\alpha-(d+1)}+N^{4\alpha-2(d+1)}\right)\|\tilde{a}\|_{d+1}\\
						&\quad\quad +CN^{-1}\|\tilde{a}-a_N^{(M)}\|_{4\alpha-(d+1)}.
					\end{align*}
					Noting that $V_0:H^{4\alpha-(d+1)}\rightarrow H^{2\alpha-(d+1)}$ is a continuous isomorphism, that $\mathcal{I}+\mathcal{K}^*:H^{2\alpha-(d+1)}\rightarrow H^{2\alpha-(d+1)}$ is invertible, and that $V_0^{-1}\mathcal{K}^*V_0:H^{4\alpha-(d+1)}\rightarrow H^{d+1}$ is bounded (by the assumptions on $\mathcal{K}$), there is a constant $\tilde{C}>0$ such that
					\begin{align*}
						\|\tilde{a}-a_N^{(M)}\|_{4\alpha-(d+1)}&\leq \tilde{C}\left(M^{2\alpha-(d+1)}+N^{4\alpha-2(d+1)}\right)\|\tilde{a}\|_{d+1}\\
						&+\tilde{C}\left(M^{2\alpha-(d+1)}+N^{4\alpha-2(d+1)}+N^{-1}\right)\|\tilde{a}-a_N^{(M)}\|_{4\alpha-(d+1)}.
					\end{align*}
					{Equivalently,
						\begin{align*}
							\left(1-\tilde{C}\left(M^{2\alpha-(d+1)}+N^{4\alpha-2(d+1)}+N^{-1}\right)\right)&\|\tilde{a}-a_N^{(M)}\|_{4\alpha-(d+1)}\\
							&\hspace{-1cm}\leq \tilde{C}\left(M^{2\alpha-(d+1)}+N^{4\alpha-2(d+1)}\right)\|\tilde{a}\|_{d+1}.
					\end{align*}}
					Thus we conclude that, for $M,N$ sufficiently large, the estimate Eq.~\eqref{eqn:discrete_orthogonality_conditions_perturbed_test_functions_estimate} holds.
				\end{subproof}
				Having proved Claim~\ref{claim:discrete_orthogonality_conditions_perturbed_test_functions} we can proceed to prove Thm.~\ref{thm:compact_perturbation_of_discrete_bubnov_galerkin} as follows. Suppose $u_N^{(M)}\in S_N$ satisfies: $\forall \chi_N\in S_N$,
				\begin{align*}%\label{eqn:discrete_bubnov_compact_perturbation_orthogonality_conditions2}
					\left\langle (\mathcal{I}+\mathcal{K})V_0\chi_N,(\mathcal{I}+\mathcal{K})V_0u_N^{(M)}\right\rangle_M=	\left\langle (\mathcal{I}+\mathcal{K})V_0\chi_N, (\mathcal{I}+\mathcal{K})V_0{u}\right\rangle_M.
				\end{align*}
				These conditions are equivalent to: $\forall\chi_N\in S_N$,
				\begin{align*}
					\left\langle (\mathcal{I}+\mathcal{K})V_0\chi_N,V_0u_N^{(M)}\right\rangle_M=\left\langle (\mathcal{I}+\mathcal{K})V_0\chi_N, V_0\left({u}+V_0^{-1}\mathcal{K}V_0({u}-u_N^{(M)})\right)\right\rangle_M.
				\end{align*}
				Thus by Claim~\ref{claim:discrete_orthogonality_conditions_perturbed_test_functions} we have for some $C>0$
				\begin{align*}
					\!\|V_0^{-1}(\mathcal{I}+\mathcal{K})V_0&\!\left(u_N^{(M)}-{u}\right)\!\|_{4\alpha-(d+1)}\!\leq\! C\!\left(M^{2\alpha-(d+1)}+N^{4\alpha-2(d+1)}\right)\!\|{u}+V_0^{-1}\mathcal{K}V_0({u}-u_N^{(M)})\|_{d+1}.
				\end{align*}
				We note that by continuity of $\mathcal{K}:H^{2\alpha-(d+1)}\rightarrow H^{d+1-2\alpha}$ we have, for some $C_2>0$,
				\begin{align*}
					\|{u}+V_0^{-1}\mathcal{K}V_0({u}-u_N^{(M)})\|_{d+1}\leq \|{u}\|_{d+1}+C_2\|{u}-u_N^{(M)}\|_{4\alpha-(d+1)}.
				\end{align*}
				Moreover, by the assumptions on $V_0,\mathcal{K}$ the map $V_0^{-1}(\mathcal{I}+\mathcal{K})^{-1}V_0:H^{4\alpha-(d+1)}\rightarrow H^{4\alpha-(d+1)}$ is bounded and, therefore, we have, for some $\tilde{C}>0$ independent of ${u},u^{(M)}_N,M,N$,
				\begin{align*}
					\left(1-\tilde{C}\left(M^{2\alpha-(d+1)}+N^{4\alpha-2(d+1)}\right)\right)&\|{u}-u_N^{(M)}\|_{4\alpha-(d+1)}\!\leq\!\tilde{C} \!\left(M^{2\alpha-(d+1)}+N^{4\alpha-2(d+1)}\right)\!\|{u}\|_{d+1}.
				\end{align*}
				Thus, we conclude for $N,M$ sufficiently large the estimate Eq.~\eqref{eqn4:full_error_estimate_final_goal} holds, hence completing the proof of Thm.~\ref{thm:compact_perturbation_of_discrete_bubnov_galerkin}.
			\end{proof}
			\section{Numerical examples}\label{sec:numerical_experiments}
			Having proved Thm.~\ref{thm:compact_perturbation_of_discrete_bubnov_galerkin} we will now see in two numerical examples that the convergence rates predicted in Thm.~\ref{thm:compact_perturbation_of_discrete_bubnov_galerkin} are indeed observed in practice. In order to do so we apply the oversampled collocation method as introduced in \S\ref{sec:mathematical_introduction} to integral formulations of Laplace's equation as described in \S\ref{sec:examples_of_integral_operators_in_this_form}. In both numerical examples we used the Julia package \cite{simpleintegralequations} as an implementation of the relevant integral equation and numerical schemes and our reference solution is a Galerkin approximation with linear spline basis functions and an equispaced mesh with $N=4096$ points.
			\subsection{Application to potential flow about compact body}\label{sec:example_application_to_potential_flow}
			In our first example we aim to solve for the inviscid irrotational incompressible flow around a compact obstacle, $\Omega$ which in our case is the ellipse shown in Fig.~\ref{fig:obstacle_and_flow}. This means \cite[Chapter 6]{batchelor_2000} that we seek $\mathbf{v}$ such that
			\begin{align*}
				\begin{cases}\bm{\nabla}\cdot\mathbf{v}(x)=0 \,\land\,\bm{\nabla}\times\mathbf{v}(x)=0,&x\in\Omega,\\
					\mathbf{n}(x)\cdot \mathbf{v}(x)=0,& x\in\partial \Omega,\\
					\mathbf{v}(x)\rightarrow (U,0),& |x|\rightarrow \infty.
				\end{cases}
			\end{align*}
			Thus we can write $\mathbf{v}=(U,0)+\nabla\phi$, where the perturbation velocity potential satisfies Eq.~\eqref{eqn:exterior_neumann_Laplace} with $g(x)=-\mathbf{n}(x)\cdot (U,0),\, x\in\partial\Omega$. Therefore we can use Eq.~\eqref{eqn:adjoint_double_layer_formulation_exterior_neumann} to solve for $\phi$.
			\begin{figure}[h!]
				\centering\vspace{-0.1cm}
				\includegraphics[width=0.6\textwidth]{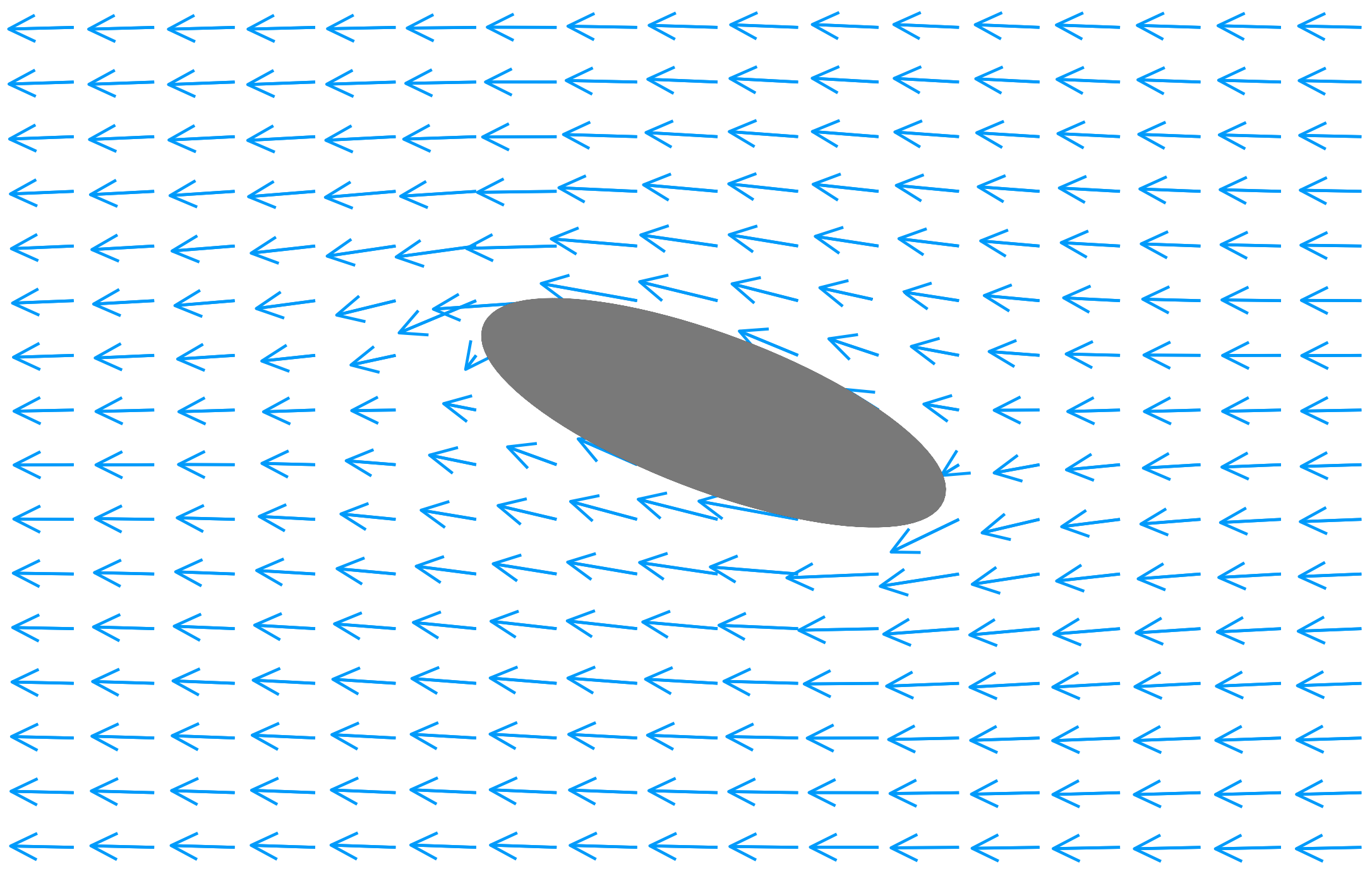}
				\caption{The velocity field $\mathbf{v}=(U,0)+\bm{\nabla}\phi$ of potential flow around $\Omega$.}
				\label{fig:obstacle_and_flow}
			\end{figure}
			
			We use a linear spline basis (i.e. $d=1$) and note that for the integral operator $V=-\mathcal{I}/2+\mathcal{D}^*$ we have $\alpha=0$. Furthermore in our example we took $U=-1$. Therefore, the results in Thm.~\ref{thm:compact_perturbation_of_discrete_bubnov_galerkin} predict the following asymptotic convergence rate for $M,N$ sufficiently large:
			\begin{align}\label{eqn:estimate_for_potential_flow_example}
				\|u_N^{(M)}-u\|_{-2}\leq C \left(M^{-2}+N^{-4}\right)\|u\|_2,
			\end{align}
			for some $C>0$. We have plotted the approximation error $ \|u_N^{(M)}-u\|_{-2}$ in Figs.~\ref{fig:Sobolev_error_flow_example_superlinear_oversampling} \& \ref{fig:Sobolev_error_flow_example_linear_oversampling}.%\newpage

			In Fig.~\ref{fig:Sobolev_error_flow_example_superlinear_oversampling} we observe the asymptotic convergence rates of the overall method for the standard collocation method $M(N)=N$, the oversampled collocation method with linear oversampling $M(N)=5N$ and the oversampled collocation method with quadratic oversampling $M(N)=N^2$. The asymptotic convergence rates are indicated using the dash-dotted lines whereby here and in all following figures the constants $C_1,C_2$ bear no relation to the constants in Eq.~\eqref{eqn:estimate_for_potential_flow_example}, these are simply included so that these lines are easier to see. It can be seen that the predicted convergence rates from Eq.~\eqref{eqn:estimate_for_potential_flow_example} are exactly realised in practice.
			
			\begin{figure}[h!]
				\centering\vspace{-0.1cm}
				\includegraphics[width=0.7\textwidth]{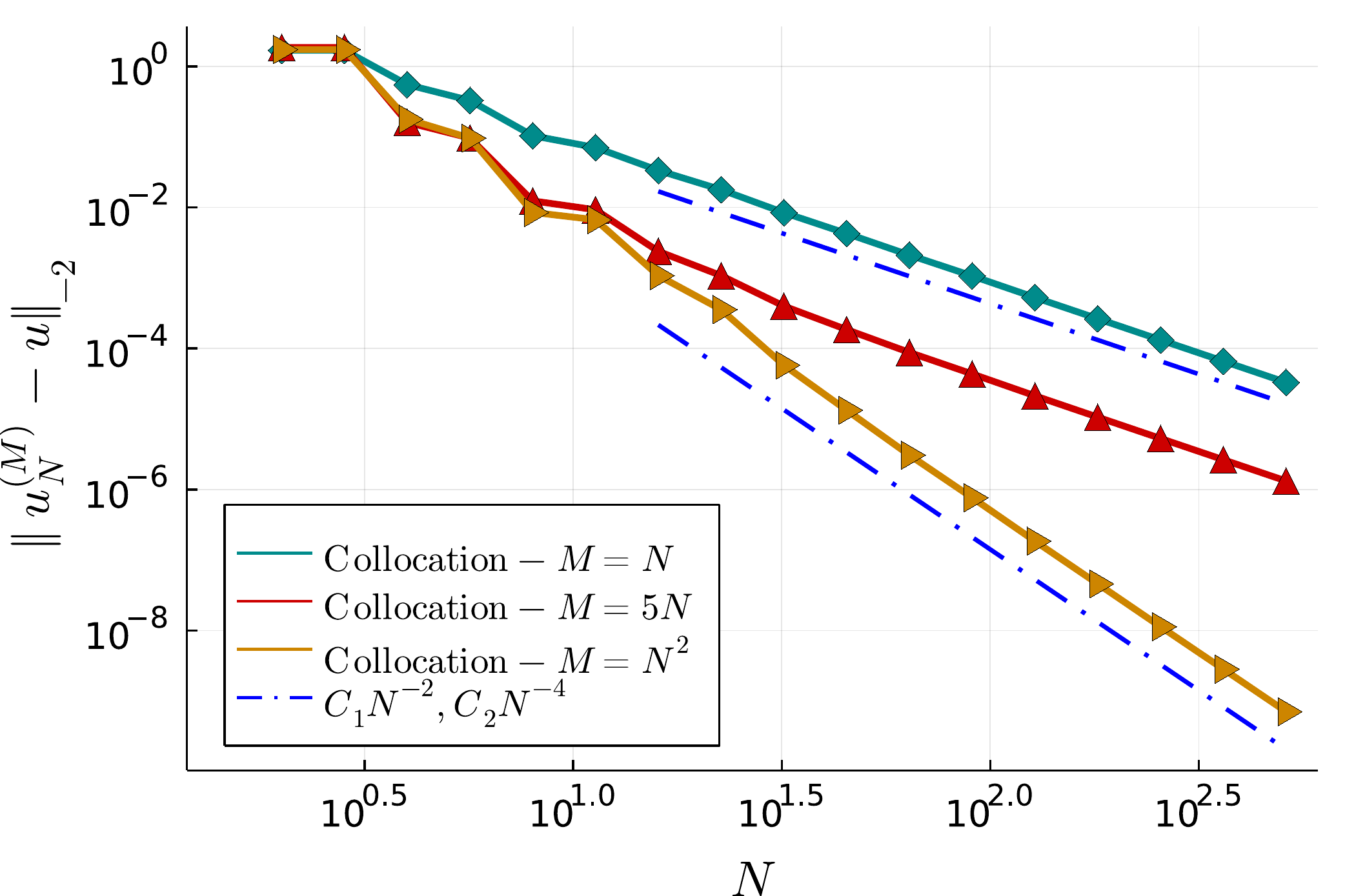}
				\caption{The approximation error in $H^{-2}$ for a range of rates of oversampling.}
				\label{fig:Sobolev_error_flow_example_superlinear_oversampling}%\vspace{-0.1cm}
			\end{figure}
			
			We already mentioned in \S\ref{sec:previous_convergence_results} that for $N$ sufficiently large the estimate Eq.~\eqref{eqn:estimate_for_potential_flow_example} also predicts the reduction in error constant achieved by linear oversampling. In particular, if we take $M(N)=JN$ for some $J\in\mathbb{N}$, Eq.~\eqref{eqn:estimate_for_potential_flow_example} predicts that as long as $1\leq J\lesssim N$ the error should decay at quadratic rate in $J$. This feature is indeed observed in Fig.~\ref{fig:Sobolev_error_flow_example_linear_oversampling} and means that already linear oversampling is advantageous -- in the present example it reduces the error constant at quadratic rate whilst only incurring linear cost in $J$.
			
			\begin{figure}[h!]
				\centering\vspace{-0.1cm}
				\includegraphics[width=0.7\textwidth]{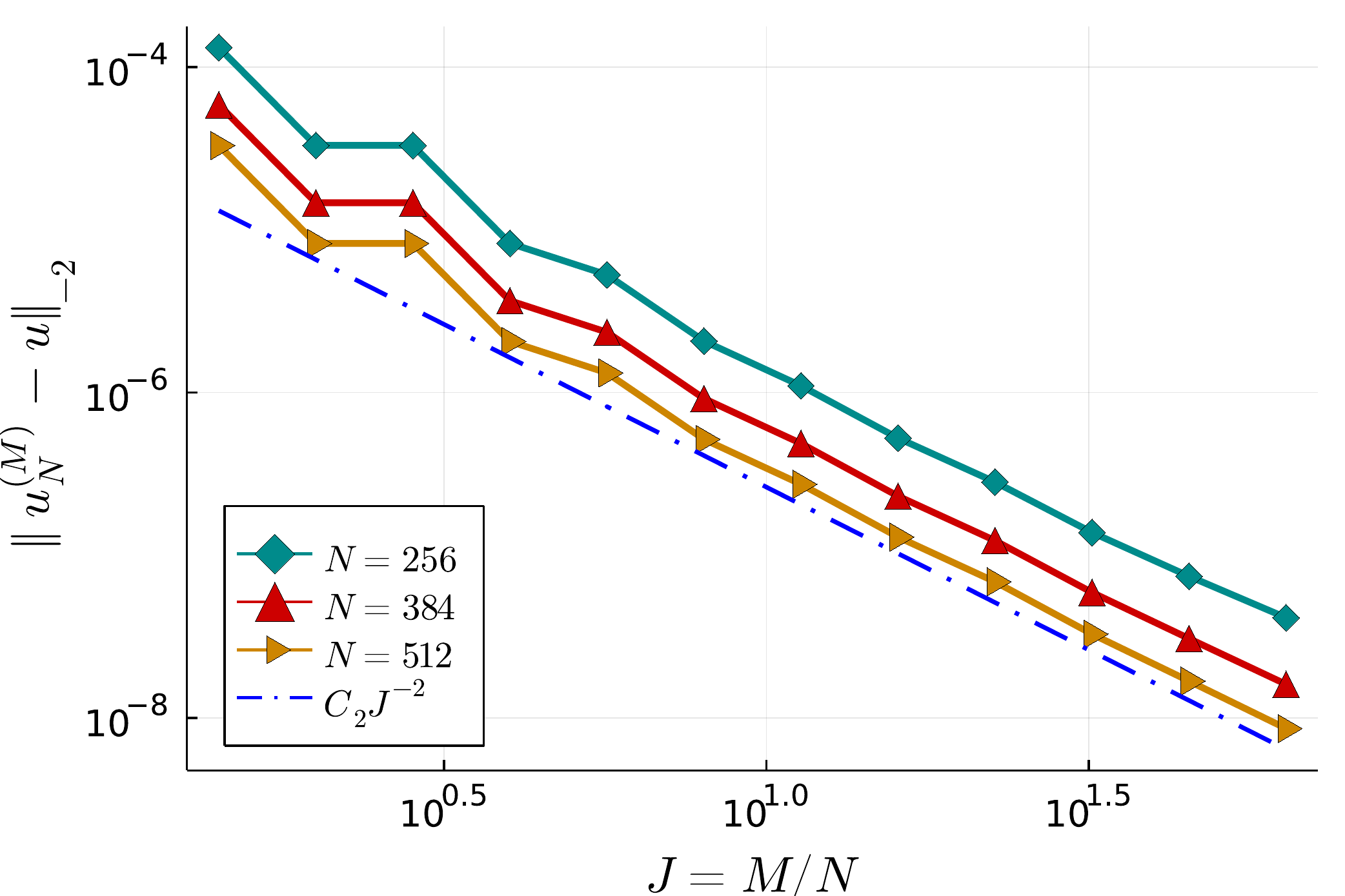}
				\caption{The approximation error in $H^{-2}$ as a function of the linear oversampling constant, $J=M/N$.}
				\label{fig:Sobolev_error_flow_example_linear_oversampling}\vspace{-0.1cm}
			\end{figure}
			
			%We observe that for both linear oversampling $M= JN$ (some $J\in\mathbb{N}$) and quadratic oversampling $M=N^2$ the predicted convergence rates are realised in practice. 
			%\begin{figure}[h!]
			%    \centering
			%\begin{subfigure}{0.5\textwidth}
			%    \centering
			%    \includegraphics[width=0.99\textwidth]{images/kite_sobolev_error_s-1-0_v2.pdf}
			%    \caption{$s=-1$}
			%\end{subfigure}%
			%\begin{subfigure}{0.5\textwidth}
			%    \centering
			%    \includegraphics[width=0.99\textwidth]{images/circle_sobolev_error_s-3-0.pdf}
			%    \caption{$s=-3$}
			%\end{subfigure}
			%\begin{subfigure}{0.5\textwidth}
			%    \centering
			%    \includegraphics[width=0.99\textwidth]{images/circle_sobolev_error_s-4-0_old.pdf}
			%    \caption{$s=-3$}
			%\end{subfigure}
			%    \caption{Error in Sobolev norms for single layer formulation.}
			%    %\label{fig:my_label}
			%\end{figure}
			\subsection{Application to stationary solutions of the heat equation}\label{sec:example_appl_stationary_solutions_heat_eqn}
			In our second example we look for a stationary solution of the heat equation on the interior of a compact domain $\Omega$ which in this case is the kite shape shown in Fig.~\ref{fig:obstacle_and_heat_field}. This means we seek a solution $\phi$, the temperature of the medium, which satisfies Eq.~\eqref{eqn:interior_dirichlet_Laplace} with, in our case, the boundary condition:
			\begin{align*}
				f(x)=\cos\left(5 x_1/\sqrt{2}+5x_2/\sqrt{2}\right).
			\end{align*}
			Therefore we can use Eq.~\eqref{eqn:single_layer_formulation_interior_Dirichlet} to solve for $\phi$.
			\begin{figure}[h!]
				\centering
				\includegraphics[width=0.625\textwidth]{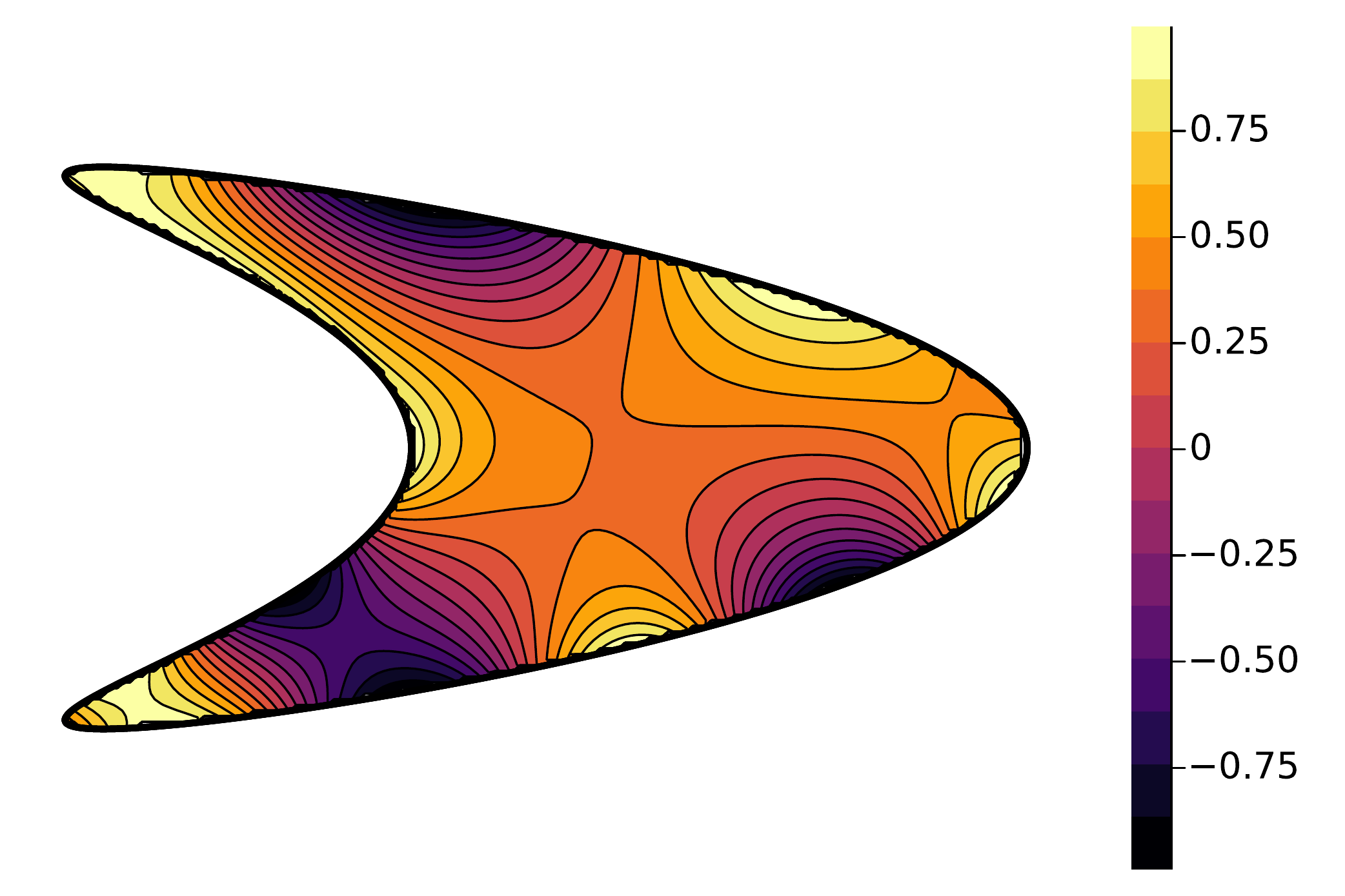}
				\caption{The stationary temperature field $\phi$ inside $\Omega$.}
				\label{fig:obstacle_and_heat_field}
			\end{figure}
			
			We again use a linear spline basis (i.e. $d=1$) and note that in this example we have $\alpha=-1/2$ for the integral operator $V=\mathcal{S}$. Thus the results of Thm.~\ref{thm:compact_perturbation_of_discrete_bubnov_galerkin} predict the following asymptotic convergence rate for $M,N$ sufficiently large:
			\begin{align}\label{eqn:estimate_for_stationary_heat_equation}
				\|u_N^{(M)}-u\|_{-4}\leq C (M^{-3}+N^{-6})\|u\|_2,
			\end{align}
			for some $C>0$. In Figs.~\ref{fig:Sobolev_error_heat_example_superlinear_oversampling} \& \ref{fig:Sobolev_error_heat_example_linear_oversampling} we plot the approximation error $\|u_N^{(M)}-u\|_{-4}$ for analogous amounts of oversampling as in Figs.~\ref{fig:Sobolev_error_flow_example_superlinear_oversampling} \& \ref{fig:Sobolev_error_flow_example_linear_oversampling} (but measured in $H^{-4}$ as opposed to $H^{-2}$). Again we included in the dash-dotted lines the predicted convergence rates. As before the constants $C_1,C_2,C_3$ in these curves are in no relation to Eq.~\eqref{eqn:estimate_for_stationary_heat_equation} and are simply included to make the plots easier to read.
			
			In Fig.~\ref{fig:Sobolev_error_heat_example_superlinear_oversampling} we observe that the predicted asymptotic rates of convergence ($\mathcal{O}(N^{-3})$ for linear oversampling, when $M(N)=N,5N$ and $\mathcal{O}(N^{-6})$ for quadratic oversampling, when $M(N)=N^2$) are indeed realised in practice.

			\begin{figure}[h!]
				\centering
				\includegraphics[width=0.7\textwidth]{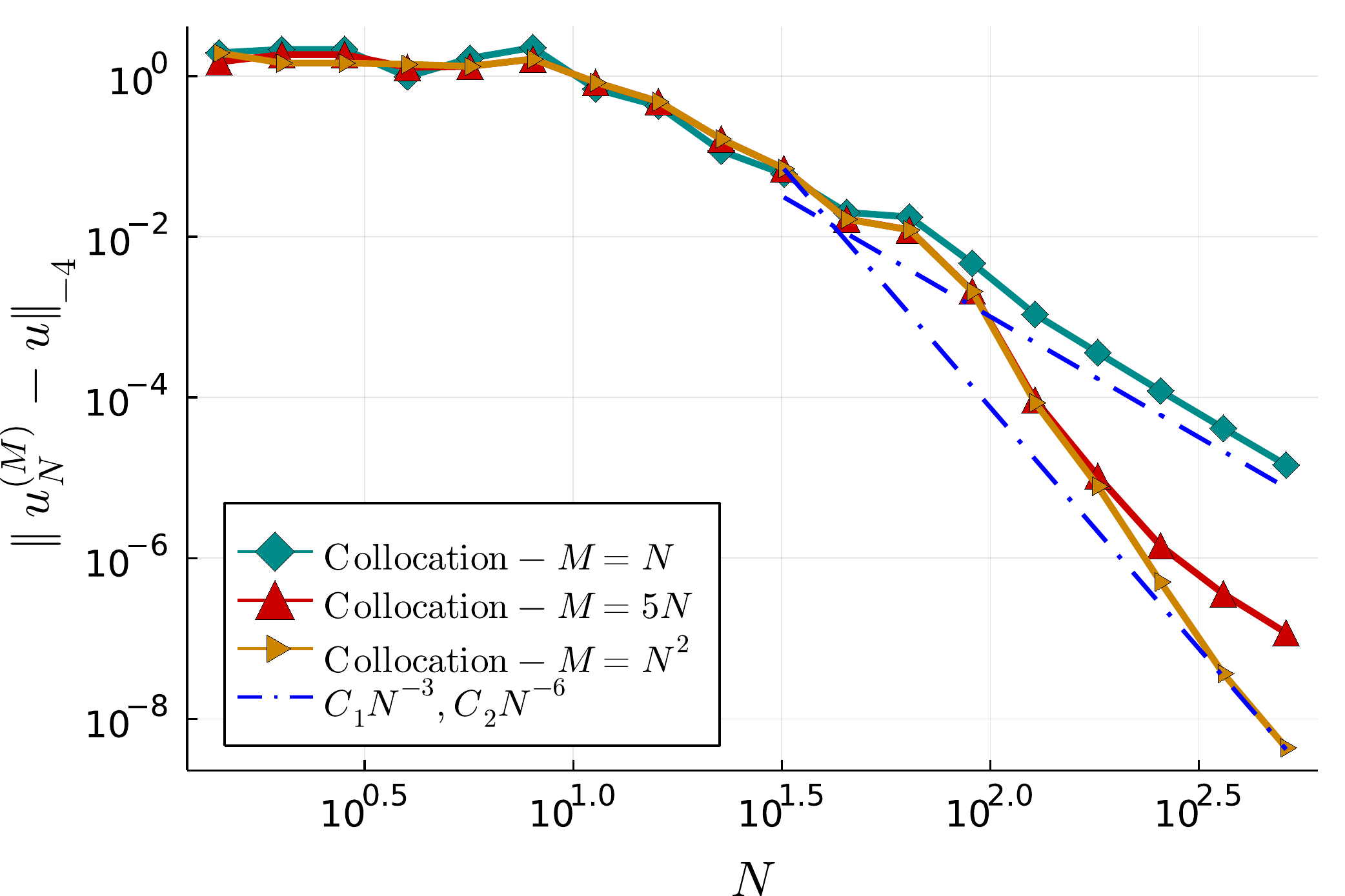}
				\caption{The approximation error in $H^{-4}$ for a range of rates of oversampling.}
				\label{fig:Sobolev_error_heat_example_superlinear_oversampling}\vspace{-0.3cm}
			\end{figure}
		
			Moreover, we can also see in this case the predicted reduction in error constant with linear oversampling, $M(N)=JN$, $J\in\mathbb{N}$. Indeed according to Eq.~\eqref{eqn:estimate_for_stationary_heat_equation}, for $N$ sufficiently large, the error constant should decay at cubic rate in $J$ in the regime $1\leq J\lesssim N$. This is indeed observed in Fig.~\ref{fig:Sobolev_error_heat_example_linear_oversampling}, where the levelling off of this convergence which can be seen for large values of $J$ in each curve is simply due to reaching the upper limit on $J$ for which this behaviour is predicted.
			
			\begin{figure}[h!]
				\centering
				\includegraphics[width=0.7\textwidth]{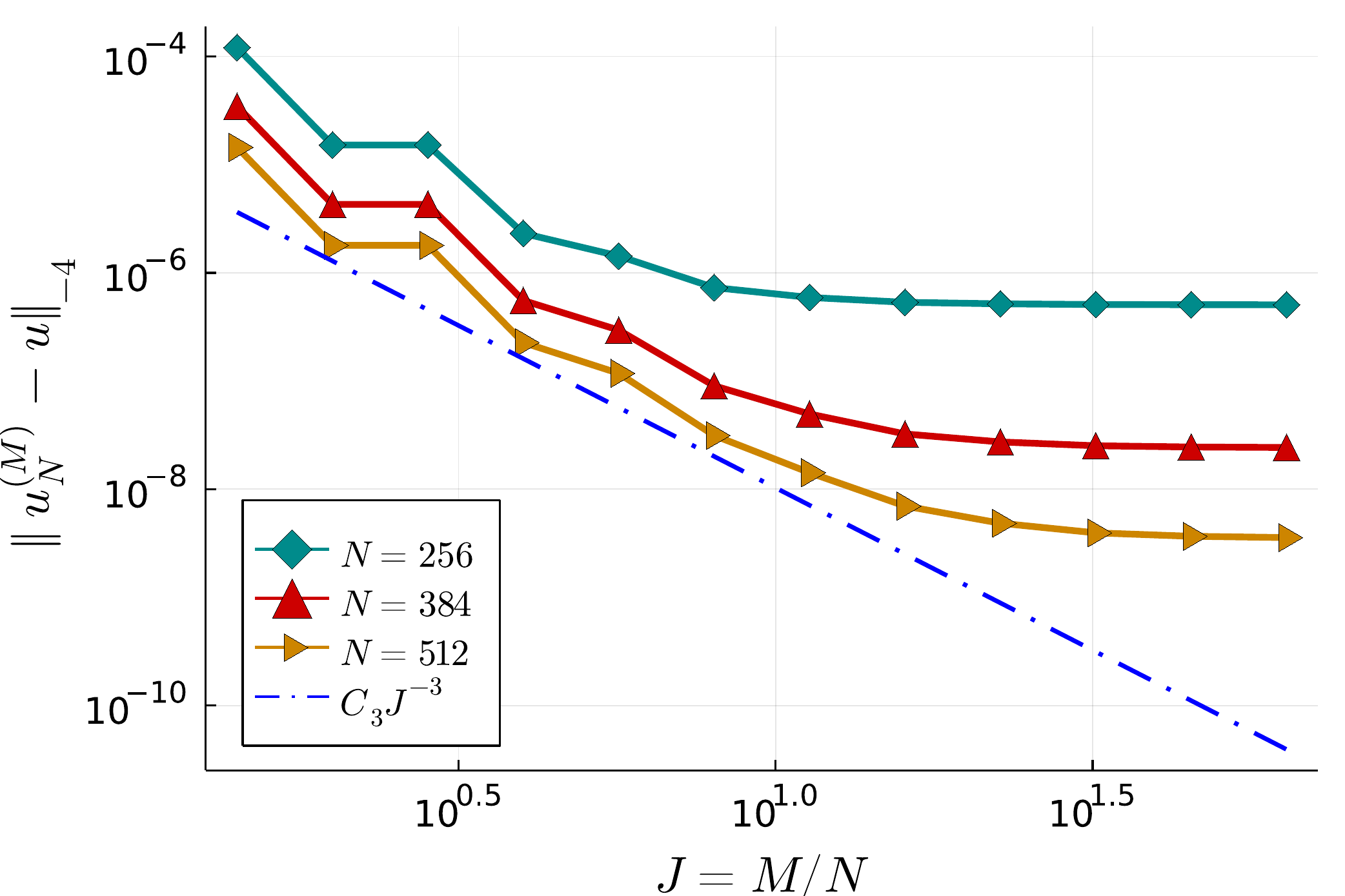}
				\caption{The approximation error in $H^{-4}$ as a function of the linear oversampling constant, $J=M/N$.}
				\label{fig:Sobolev_error_heat_example_linear_oversampling}\vspace{-0.3cm}
			\end{figure}

			\section{Conclusions}\label{sec:conclusions}
			In this manuscript we presented a novel argument that allowed us to understand the asymptotic convergence rates for oversampled collocation methods for integral operators which are certain compact perturbations of a pseudodifferential form. This provides a significant extension of prior results in \cite{maierhoferhuybrechs2021} and explains the advantageous convergence properties of the oversampled collocation method in several practically relevant settings. We provided multiple numerical examples to demonstrate these favorable properties in practice and these confirm the theoretical predictions of the convergence rates.
			
			Numerical experiments provided in \cite{maierhoferhuybrechs2021} suggest that the conclusions of Thm.~\ref{thm:compact_perturbation_of_discrete_bubnov_galerkin} also hold under weaker assumptions on the compact perturbation $\mathcal{K}$. Indeed, it was shown in \cite[\S4.G]{maierhoferthesis2021} that a related asymptotic convergence result holds true also for the single layer integral operator arising in the Dirichlet problem for the Helmholtz equation, in which case the perturbation satisfies a weaker continuity condition, namely that $\mathcal{K}:H^{p}\rightarrow H^{p+3}$ is continuous  for all $p\in\mathbb{R}$ \cite{colton2013integral}. Thus we expect that further extensions of this result are possible and these will be explored in future research.
			
			In addition to further study of collocation methods, we believe the framework of oversampling could also prove advantageous in improving convergence properties of Nystr\"om methods for Fredholm integral equations \cite[Chapter 12]{kress2014}. Nystr\"om methods can in many cases be regarded as a further level of discretisation from collocation methods, which suggests that some of the favourable features of oversampling may be inherited in this setting \cite{atkinson_1997}.
			
			%In a longer term perspective we believe merit of oversampling in fully discrete Nystr\"om methods remains to be studied. More details on Nystr\"om methods can be found in the following books.
			%y Anselone (1971), Atkinson (1997) and Kress (1999, Chapter 12).
			\section*{Acknowledgements}
			The authors would like to thank Katharina Schratz (Sorbonne University) for valuable input on a draft of this manuscript and Simon Chandler-Wilde (University of Reading) for a number of interesting discussions and feedback on an earlier version of these results.%\questions{[Is there anyone we should acknowledge in particular?]}
			
			GM gratefully acknowledges funding for this project from the European Research Council (ERC) under the European Union’s Horizon 2020 research and innovation programme (grant agreement No.\ 850941) as well as from the UK Engineering and Physical Sciences Research Council (EPSRC) grant EP/L016516/1 for the University of Cambridge Centre for Doctoral Training, the Cambridge Centre for Analysis.
			DH acknowledges financial support from KU Leuven grant C14/55/055.

% Bibliography
\bibliographystyle{siam}      
\bibliography{mybibfile}

\begin{appendix}
\section{Proof of Proposition~\ref{prop:perturbed_orthogonality_conditions}}\label{app:proof_of_central_a_priori_estimate}
For completeness let us firstly recall the statement of the proposition.
\begin{proposition}\label{prop:perturbed_orthogonality_conditions_app}
	Suppose $a_N^{(M)}\in S_N$ satisfies 
	\begin{align}\label{eqn:unperturbed_apriori_orhtogonality_condition2}
		\left\langle V_0\chi_N,V_0a_N^{(M)}\right\rangle_M=\left\langle V_0\chi_N,V_0\tilde{v}\right\rangle_M+\epsilon(\chi_N,b-c_N),\ \quad\forall \chi_N\in S_N,
	\end{align}
	for some $\tilde{v},b\in H^{d+1}$ and a sequence of smoothest splines $c_N\in S_N$, $N\in\mathbb{N}$. Then there is a constant $C>0$ independent of $N,M,\tilde{v},b,c_N$ such that for all $M\geq N>0$:
	\begin{align*}
		\|\tilde{v}-a_N^{(M)}\|_{4\alpha-(d+1)}&\leq C \left(M^{2\alpha-(d+1)}+N^{4\alpha-2(d+1)}\right)\left(\|\tilde{v}\|_{d+1}+\|b\|_{d+1}\right)\\
		&\quad\quad +CN^{-1}\|b-c_N\|_{4\alpha-(d+1)}.
	\end{align*}
\end{proposition}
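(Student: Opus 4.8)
The plan is to treat $\epsilon(\chi_N,b-c_N)$ as a perturbation of the right-hand side of the \emph{unperturbed} discrete orthogonality conditions and to reduce the statement to Theorem~\ref{thm:convergence_rates_equispaced_grids}. By linearity of Eq.~\eqref{eqn:unperturbed_apriori_orhtogonality_condition2} we split $a_N^{(M)}=\bar a_N+w_N$ with $\bar a_N,w_N\in S_N$, where $\bar a_N$ satisfies $\langle V_0\chi_N,V_0\bar a_N\rangle_M=\langle V_0\chi_N,V_0\tilde v\rangle_M$ for all $\chi_N\in S_N$ and $w_N$ satisfies $\langle V_0\chi_N,V_0w_N\rangle_M=\epsilon(\chi_N,b-c_N)$ for all $\chi_N\in S_N$. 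Theorem~\ref{thm:convergence_rates_equispaced_grids} applied to $\bar a_N$ already gives $\|\bar a_N-\tilde v\|_{4\alpha-(d+1)}\le C(M^{2\alpha-(d+1)}+N^{4\alpha-2(d+1)})\|\tilde v\|_{d+1}$, so it remains to bound $\|w_N\|_{4\alpha-(d+1)}$ by $C(M^{2\alpha-(d+1)}+N^{4\alpha-2(d+1)})\|b\|_{d+1}+CN^{-1}\|b-c_N\|_{4\alpha-(d+1)}$. For this we would use the discrete stability / Aubin--Nitsche estimate for the bilinear form $(\chi,\psi)\mapsto\langle V_0\chi,V_0\psi\rangle_M$ that is established in the course of proving Theorem~\ref{thm:convergence_rates_equispaced_grids}, which controls $\|w_N\|_{4\alpha-(d+1)}$ through the discrete functional $\chi_N\mapsto\langle V_0\chi_N,V_0w_N\rangle_M=\epsilon(\chi_N,b-c_N)$ tested over $S_N$. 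Everything then hinges on a sufficiently sharp bound for $\epsilon(\chi_N,b-c_N)$.

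To obtain such a bound we would expand all functions occurring in $\epsilon$ in Fourier series and use that the discrete form aliases frequencies modulo $M$, that is $\langle f,g\rangle_M=\sum_{k\in\mathbb Z}\sum_{j\in\mathbb Z}\overline{\hat f_k}\,\hat g_{k+jM}$, in which the $j=0$ contribution is precisely $\langle f,g\rangle_{L^2}$. Inserting the Fourier representations of $\mathcal K$ and $\mathcal K^*$ from Eq.~\eqref{eqn:decay_of_K_coefficients} into $\epsilon(\chi_N,\beta)=\langle V_0\chi_N,\mathcal K^*V_0\beta\rangle_M-\langle\mathcal KV_0\chi_N,V_0\beta\rangle_M$, the $j=0$ parts of the two terms combine to $\langle V_0\chi_N,\mathcal K^*V_0\beta\rangle_{L^2}-\langle\mathcal KV_0\chi_N,V_0\beta\rangle_{L^2}=0$ because $\mathcal K^*$ is the $L^2$-adjoint of $\mathcal K$. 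Hence $\epsilon(\chi_N,\beta)$ collapses to a sum over $j\ne0$ of aliased contributions, each of which contains a coefficient $k_{m,n}$ in which one of the two indices has been shifted by a nonzero multiple of $M$; this is the quantitative form of the statement in the proof sketch that the low-frequency part is exact and cancels.

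To estimate the surviving aliased sum, one natural organization is to write $b-c_N=(b-Q_Nb)+(Q_Nb-c_N)$ with $Q_N$ a standard spline quasi-interpolant onto $S_N$, so that $\|b-Q_Nb\|_s\le CN^{s-(d+1)}\|b\|_{d+1}$ for $s\le d+1$ and $Q_Nb-c_N\in S_N$. For the part with $H^{d+1}$-regular second argument (i.e.\ $b-Q_Nb$), the continuity of $\mathcal K:H^p\to H^q$ for \emph{all} $p,q$ makes $\mathcal K^*V_0(\,\cdot\,)$ and $\mathcal KV_0\chi_N$ infinitely smooth, so the shift of an index by $jM$, $j\ne0$, produces an arbitrarily large negative power of $M$ away from the set $M\mathbb Z$, while near nonzero multiples of $M$ the decay $|\widehat{B_d}(k/N)|\lesssim(N/|k|)^{d+1}$ of the Fourier coefficients of the spline $V_0\chi_N$ controls the residual; summing these contributions carefully yields a bound of the form $C(M^{2\alpha-(d+1)}+N^{4\alpha-2(d+1)})\cdot(\text{norm of }\chi_N)\cdot\|b\|_{d+1}$. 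For the spline part $Q_Nb-c_N\in S_N$, both arguments of $\epsilon$ are smoothest splines subordinate to the $N$-grid; the aliased pairing of two such splines (this is where the maximal smoothness of $S_N$ enters) gains a full power of $N^{-1}$ relative to the norm $\|\cdot\|_{4\alpha-(d+1)}$, giving $CN^{-1}\cdot(\text{norm of }\chi_N)\cdot\|Q_Nb-c_N\|_{4\alpha-(d+1)}$, after which $\|Q_Nb-c_N\|_{4\alpha-(d+1)}\le\|b-Q_Nb\|_{4\alpha-(d+1)}+\|b-c_N\|_{4\alpha-(d+1)}\le CN^{4\alpha-2(d+1)}\|b\|_{d+1}+\|b-c_N\|_{4\alpha-(d+1)}$. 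Feeding these bounds into the stability estimate and combining with the bound for $\bar a_N$ gives Eq.~\eqref{eqn:proposition_central_estimate}.

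The main obstacle is the quantitative bookkeeping in this last step: one must make the surviving aliased sums produce \emph{exactly} the exponents $2\alpha-(d+1)$ in $M$ and $4\alpha-2(d+1)$ in $N$ for the smooth part, and \emph{exactly} a factor $N^{-1}$ for the spline part, with the latter done \emph{without} ever invoking $\|c_N\|_{d+1}$, which is infinite for a genuine degree-$d$ spline, so that the final estimate involves $\|b\|_{d+1}$ and $\|b-c_N\|_{4\alpha-(d+1)}$ but not $\|c_N\|_{d+1}$. Interlacing this with the Aubin--Nitsche argument, so that the positive powers of $N$ picked up from the B-spline Fourier tails are absorbed by the approximation error of the auxiliary dual problem, is the delicate part; by contrast, the cancellation of the $j=0$ term and the reduction in the first paragraph are straightforward structural steps, and the discrete stability estimate is a direct adaptation of ingredients already present in the proof of Theorem~\ref{thm:convergence_rates_equispaced_grids}.
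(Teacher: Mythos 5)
Your overall architecture coincides with the paper's: the splitting $a_N^{(M)}=\bar a_N+w_N$ is exactly the paper's splitting of the coefficients $a_\mu=a'_\mu+(a_\mu-a'_\mu)$ in the special basis $\psi_\mu$; your ``discrete stability estimate'' is realised in the paper by the fact that the Gram matrix $\langle V_0\psi_\mu,V_0\psi_\nu\rangle_M$ is \emph{diagonal} with entries bounded below by $[\mu]^{4\alpha}$, giving $|a_\mu-a'_\mu|\le[\mu]^{-4\alpha}|\epsilon(\psi_\mu,b-c_N)|$ (Lemma~\ref{lem:app_estimate_a-a'_in_terms_of_epsilon}); and the cancellation of the non-aliased ($j=0$) contribution to $\epsilon$ because $\mathcal K^*$ is the $L^2$-adjoint of $\mathcal K$ is precisely the paper's first step in Lemma~\ref{lem:app_estimate_size_of_epsilon_terms}. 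So the structural part of your plan is sound and matches the source.

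The gap is that the estimate you defer as ``quantitative bookkeeping'' is not a routine verification but the entire content of the paper's Lemma~\ref{lem:app_estimate_size_of_epsilon_terms}, and the mechanism you sketch for it is not obviously the one that works. You propose splitting $b-c_N=(b-Q_Nb)+(Q_Nb-c_N)$ and assert that the aliased pairing of two splines ``gains a full power of $N^{-1}$''; the paper instead bounds the surviving aliased sum by splitting it into $A_1$ (aliasing in the test-function slot) and $A_2=A_{21}+A_{22}$ (aliasing in the trial slot, with $m\in\Lambda_N$ and $m\notin\Lambda_N$ treated separately). The factor $N^{-1}$ multiplying $\|b-c_N\|_{4\alpha-(d+1)}$ does not come from a spline--spline aliasing gain: in $A_1$ it comes from $\sum_{l\neq0}[\mu+lN]^{2\alpha-(d+1)}\lesssim N^{-1}$ (using the consistency condition $d>2\alpha$) combined with continuity of $\mathcal K V_0:H^{4\alpha-(d+1)}\to H^r$, and in $A_{22}$ from balancing the tail sum $\sum_{m\notin\Lambda_N}(1+|m|)^{-t_2}\lesssim N^{1-t_2}$ against an \emph{inverse inequality} for $\chi_N-c_N$, where $\chi_N$ is a near-best spline approximant of $b$ --- this triangle-inequality-plus-inverse-property device is exactly how the paper avoids ever invoking the (infinite) norm $\|c_N\|_{d+1}$, the difficulty you correctly flag but do not resolve. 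Likewise the exact exponents $M^{2\alpha-(d+1)}$ and $N^{4\alpha-2(d+1)}$ on the $\|b\|_{d+1}$ term require the explicit relation $\hat c_{n+lN}=\bigl(n/(n+lN)\bigr)^{d+1}\hat c_n$ for spline Fourier coefficients and a careful choice of the decay exponents $p_1,q_1,s_2,t_2$ in Eq.~\eqref{eqn:decay_of_K_coefficients}. Without carrying out this analysis, the proposal establishes the reduction but not the proposition.
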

We will formulate the proof of Prop.~\ref{prop:perturbed_orthogonality_conditions} in a sequence of four lemmas whose proofs are given in sequence in \ref{sec:app_proof_of_lemmas}. To begin with, it is convenient to use the following basis for $S_N$ (cf. \cite[\S2]{chandler1990} and \cite[\S7]{sloan_1992}), where here we write $\Lambda_N=\left\{\mu\in\mathbb{Z}:-N/2<\mu\leq N/2\right\}$ and $\Lambda_N^*=\Lambda_N\setminus\{0\}$:
\begin{align*}
	\psi_\mu(x)=\begin{cases}
		1,&\mu=0,\\
		\sum_{k\equiv \mu (N)}(\mu/k)^{d+1} e^{2\pi i kx},&\mu\in\Lambda_N^*.
	\end{cases}
\end{align*}
The coefficients $a_\mu, \mu\in\Lambda_N$, of the expansion $a_N^{(M)}=\sum_{\mu\in\Lambda_N}a_\mu\psi_\mu$ in this basis are equal to the Fourier coefficients $\widehat{a_N^{(M)}}_\mu, \mu\in\Lambda_N$. In this basis the orthogonality conditions Eq.~\eqref{eqn:unperturbed_apriori_orhtogonality_condition2} are equivalent to the following $N\times N$ linear system
\begin{align}\label{eqn:discrete_orthogonality_special_basis}
	\sum_{\nu\in\Lambda_N} \left\langle V_0\psi_\mu,V_0\psi_{\nu}\right\rangle_M a_\nu=\left\langle V_0\psi_{\mu}, V_0\tilde{v}\right\rangle_M+\epsilon(\psi_{\mu},b-c_N), \quad \mu\in\Lambda_N.
\end{align}
Let us denote by $a_\mu',\mu\in\Lambda_N$ the solution of the following system
\begin{align}\label{eqn:app_unperturbed_linear_system}
	\sum_{\nu\in\Lambda_N} \left\langle V_0\psi_\mu,V_0\psi_{\nu}\right\rangle_M a_\nu'=\left\langle V_0\psi_{\mu}, V_0\tilde{v}\right\rangle_M, \quad \mu\in\Lambda_N,
\end{align}
which is precisely what was studied in \cite[\S3.3]{maierhoferhuybrechs2021}. Our first lemma allows us to estimate the error in approximation in terms of the coefficients $a_\mu,a_\mu'$.

\begin{lemma}\label{lem:app_estimate_sobolev_norm_in_terms_of_low_coef}
	There is $C>0$ such that, for any $M\geq N\geq1$,
	\begin{align}\label{eqn:app_estimate_sobolev_norm_in_terms_of_low_coef}
			\|a_N^{(M)}-\tilde{v}\|_{{4\alpha-(d+1)}}^2&\leq C\sum_{\mu\in\Lambda_N}\left[\mu\right]^{2(4\alpha-(d+1))}|{a}_\mu'-\hat{v}_\mu|^2+C\sum_{\mu\in\Lambda_N}\left[\mu\right]^{2(4\alpha-(d+1))}|{a}_\mu-a_\mu'|^2.
	\end{align}
\end{lemma}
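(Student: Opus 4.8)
The plan is to pass to the Fourier side and exploit that the basis $\{\psi_\mu\}_{\mu\in\Lambda_N}$ has pairwise disjoint frequency supports: for $k\in\mathbb Z$ let $\mu(k)\in\Lambda_N$ be the unique representative of $k$ modulo $N$; then $\sum_{\mu}c_\mu\psi_\mu$ has Fourier coefficient $c_{\mu(k)}(\mu(k)/k)^{d+1}$ at frequency $k$ when $\mu(k)\neq 0$ (and only the constant mode survives from $\mu=0$). Writing $s:=4\alpha-(d+1)$, Parseval then gives the \emph{exact} identity $\|\sum_{\mu\in\Lambda_N}c_\mu\psi_\mu\|_s^2=\sum_{\mu\in\Lambda_N}|c_\mu|^2\,\|\psi_\mu\|_s^2$, with $\|\psi_0\|_s=1$ and, for $\mu\in\Lambda_N^*$, $\|\psi_\mu\|_s^2=[\mu]^{2(d+1)}\sum_{k\equiv\mu\,(N)}[k]^{2(s-(d+1))}$. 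The key point is that this last series converges and is $\le C[\mu]^{2s}$: the consistency condition $d>2\alpha$ forces $2(s-(d+1))=8\alpha-4(d+1)<-1$, the $k=\mu$ term contributes $[\mu]^{2(s-(d+1))}$, and since $[k]\ge[\mu]$ on the residue class when $|\mu|\le N/2$ the remaining terms sum to $O(N^{2(s-(d+1))})\le C[\mu]^{2(s-(d+1))}$. This yields the inverse-type bound $\|\sum_\mu c_\mu\psi_\mu\|_s^2\le C\sum_\mu[\mu]^{2s}|c_\mu|^2$, which I will use twice.

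Next I would split
\[
a_N^{(M)}-\tilde v=\sum_{\mu\in\Lambda_N}(a_\mu-a_\mu')\psi_\mu+\sum_{\mu\in\Lambda_N}(a_\mu'-\hat v_\mu)\psi_\mu+\bigl(Q_N\tilde v-\tilde v\bigr),\qquad Q_N\tilde v:=\sum_{\mu\in\Lambda_N}\hat v_\mu\psi_\mu .
\]
Applying the inverse estimate to each of the first two (spline) terms produces precisely the two sums $C\sum_\mu[\mu]^{2s}|a_\mu-a_\mu'|^2$ and $C\sum_\mu[\mu]^{2s}|a_\mu'-\hat v_\mu|^2$ on the right-hand side of the claim. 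For the remainder $Q_N\tilde v-\tilde v$, whose Fourier coefficients vanish on $\Lambda_N$ and equal $\hat v_{\mu(k)}(\mu(k)/k)^{d+1}-\hat v_k$ for $|k|>N/2$, I would use the decay $|\mu(k)/k|^{d+1}$ together with $\tilde v\in H^{d+1}$, $s<d+1$, and the same summability to obtain the standard spline-approximation bound $\|Q_N\tilde v-\tilde v\|_s\le CN^{s-(d+1)}\|\tilde v\|_{d+1}=CN^{4\alpha-2(d+1)}\|\tilde v\|_{d+1}$. Squaring and combining via the triangle inequality then gives the claim up to an extra term $CN^{2(4\alpha-2(d+1))}\|\tilde v\|_{d+1}^2$.

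The main obstacle is removing that extra term, i.e.\ showing that the spline-approximation remainder is itself dominated by the low-frequency collocation discrepancy, $\|Q_N\tilde v-\tilde v\|_s^2\le C\sum_{\mu\in\Lambda_N}[\mu]^{2s}|a_\mu'-\hat v_\mu|^2$; equivalently, that the unperturbed collocation error $\|a_N^{(M),\mathrm{unp}}-\tilde v\|_s$ (with $a_N^{(M),\mathrm{unp}}=\sum_\mu a_\mu'\psi_\mu$) is controlled, up to a constant, by the weighted $\ell^2$-norm of its restriction to $\Lambda_N$. I expect this to be the technical heart of the argument and the place where the unperturbed analysis of \cite[\S3.3]{maierhoferhuybrechs2021} re-enters: because $\langle\cdot,\cdot\rangle_M$ identifies Fourier modes differing by multiples of $M$ and $V_0$ weights the frequency $k$ by $[k]^{2\alpha}$, the discrepancies $\langle V_0\psi_\mu,V_0(\psi_\mu-e^{2\pi i\mu\,\cdot})\rangle_M$ that drive $a_\mu'-\hat v_\mu$ carry exactly the residue-class sums $\sum_{k\equiv\mu,\,k\neq\mu}[k]^{4\alpha-2(d+1)}$, and a direct inspection of Eq.~\eqref{eqn:app_unperturbed_linear_system} shows $[\mu]^{2s}|a_\mu'-\hat v_\mu|^2$ is then of the same order $[\mu]^{2(d+1)}N^{2(4\alpha-2(d+1))}$ as the corresponding slice of $\|Q_N\tilde v-\tilde v\|_s^2$. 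If one is content to keep a $\|\tilde v\|_{d+1}$-term, an alternative softer route is simply to carry $CN^{4\alpha-2(d+1)}\|\tilde v\|_{d+1}$ forward, since it is subsumed by the bounds ultimately derived in Proposition~\ref{prop:perturbed_orthogonality_conditions_app}.
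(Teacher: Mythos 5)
Your proposal follows essentially the same route as the paper: expand in the frequency-decoupled basis $\{\psi_\mu\}$, use Parseval and the summability of $\sum_{k\equiv\mu(N)}[k]^{2(s-(d+1))}$ (guaranteed by $d>2\alpha$) to convert spline norms into the weighted $\ell^2$ sums over $\Lambda_N$, and handle the tail of $\tilde v$ via the low-frequency projection. The ``main obstacle'' you identify is genuine but is resolved by your softer route, which is also what the paper does in effect: the paper's proof only establishes the bound for $\|a_N^{(M)}-P_N\tilde v\|_{4\alpha-(d+1)}$, and the remainder $\|\tilde v-P_N\tilde v\|_{4\alpha-(d+1)}\leq CN^{4\alpha-2(d+1)}\|\tilde v\|_{d+1}$ is reinstated via Eq.~\eqref{eqn:estimate_for_small_frequency_projection} in the proof of Prop.~\ref{prop:perturbed_orthogonality_conditions_app}, where it is absorbed into the $N^{4\alpha-2(d+1)}\|\tilde v\|_{d+1}$ term of Eq.~\eqref{eqn:proposition_central_estimate}. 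Do not pursue your first suggested fix of dominating the projection remainder by $\sum_{\mu\in\Lambda_N}[\mu]^{2s}|a_\mu'-\hat v_\mu|^2$: the discrepancies $a_\mu'-\hat v_\mu$ arise from aliasing sums of the high frequencies of $\tilde v$ and can cancel, so no such domination holds in general; carrying the extra term forward is the correct (and intended) reading.
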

The second lemma draws on results about the terms $|{a}_\mu'-\hat{v}_\mu|$ from \cite{maierhoferhuybrechs2021}.

\begin{lemma}\label{lem:app_unperturbed_a_prior_estimate_from_previous_paper}
	If $d>2\alpha$, there are constants $N_0,C>0$ such that for any $M\geq N\geq N_0$ we have
	\begin{align}\label{eqn:estimate_sobolev_error_from_spline_expansion_2}
		\sum_{\mu\in\Lambda_N}\left[\mu\right]^{2(4\alpha-(d+1))}|a_\mu'-\hat{v}_\mu|^2\leq C \|\tilde{v}\|_{d+1}^2\left(M^{4\alpha-2(d+1)}+N^{8\alpha-4(d+1)}\right).
	\end{align}
\end{lemma}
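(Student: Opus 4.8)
The plan is to observe that the numbers $a_\mu'$, $\mu\in\Lambda_N$, are exactly the Fourier coefficients of the \emph{unperturbed} oversampled collocation solution, so that the estimate is an immediate consequence of Thm.~\ref{thm:convergence_rates_equispaced_grids} together with a comparison between a truncated weighted $\ell^2$ sum and a full Sobolev norm. Concretely, set $w_N:=\sum_{\mu\in\Lambda_N}a_\mu'\psi_\mu\in S_N$. Expanding the discrete orthogonality conditions Eq.~\eqref{eqn:discrete_orthogonality_conditions} for $V=V_0$ and exact solution $\tilde{v}$ in the basis $\{\psi_\mu\}_{\mu\in\Lambda_N}$ gives precisely the linear system Eq.~\eqref{eqn:app_unperturbed_linear_system}; by uniqueness of the collocation solution for $N$ large (as established in \cite{maierhoferhuybrechs2021}) this identifies $w_N$ with the oversampled collocation approximation of $\tilde{v}$ in the unperturbed setting. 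Since $\tilde{v}\in H^{d+1}$ and the consistency condition $d>2\alpha$ holds, Thm.~\ref{thm:convergence_rates_equispaced_grids} applies and yields, for $M\geq N\geq N_0$,
\begin{align*}
	\|w_N-\tilde{v}\|_{4\alpha-(d+1)}\leq C_{d,\alpha}\left(M^{-(d+1)+2\alpha}+N^{-2(d+1)+4\alpha}\right)\|\tilde{v}\|_{d+1}.
\end{align*}

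Next I would pass from this full Sobolev norm to the truncated sum in Eq.~\eqref{eqn:estimate_sobolev_error_from_spline_expansion_2}. The basis functions $\psi_\mu$ are designed so that, among the frequencies in $\Lambda_N$, the only Fourier mode of $\psi_\mu$ is the one at frequency $\mu$, with coefficient $1$; hence the $\mu$-th Fourier coefficient of $w_N$ equals $a_\mu'$ for every $\mu\in\Lambda_N$, as already noted in the excerpt. Dropping the non-negative contributions of the frequencies $k\notin\Lambda_N$ then gives
\begin{align*}
	\sum_{\mu\in\Lambda_N}[\mu]^{2(4\alpha-(d+1))}|a_\mu'-\hat{v}_\mu|^2=\sum_{\mu\in\Lambda_N}[\mu]^{2(4\alpha-(d+1))}\left|\widehat{(w_N-\tilde{v})}_\mu\right|^2\leq\|w_N-\tilde{v}\|_{4\alpha-(d+1)}^2.
\end{align*}
Combining the two displays and using $(x+y)^2\leq 2(x^2+y^2)$ to rewrite $(M^{2\alpha-(d+1)}+N^{4\alpha-2(d+1)})^2$ as $2(M^{4\alpha-2(d+1)}+N^{8\alpha-4(d+1)})$ delivers the claimed bound with $C=2C_{d,\alpha}^2$ and the same $N_0$ as in Thm.~\ref{thm:convergence_rates_equispaced_grids}.

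I do not expect a genuine obstacle here: all the analytical content is inherited from the unperturbed theory of \cite{maierhoferhuybrechs2021}, and the only points requiring care are the bookkeeping identifying $a_\mu'$ with $\widehat{w_N}_\mu$ on $\Lambda_N$, and the elementary observation that the truncated weighted $\ell^2$ sum is bounded by the corresponding $H^{4\alpha-(d+1)}$ norm. The role of this lemma within the proof of Prop.~\ref{prop:perturbed_orthogonality_conditions_app} is precisely to dispose of the ``unperturbed part'' of the error cheaply; the genuinely new work, coming from the bilinear form $\epsilon(\cdot,\cdot)$ and the aliasing structure of $\langle\cdot,\cdot\rangle_M$, is isolated in the remaining lemmas.
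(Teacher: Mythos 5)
Your proof is correct and in substance the same as the paper's: the paper simply cites the proof of Thm.~3 in \cite{maierhoferhuybrechs2021}, where this truncated-sum bound appears as an intermediate step, whereas you recover it from that theorem's final statement by identifying $a_\mu'$ with $\widehat{w_N}_\mu$ for $\mu\in\Lambda_N$ and using the trivial domination of the truncated weighted $\ell^2$ sum by the full $H^{4\alpha-(d+1)}$ norm. Both routes defer all analytic content to the prior paper, and your bookkeeping is accurate, so no further comment is needed.
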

Given the above results it remains to bound the terms $|{a}_\mu-a_\mu'|$ from Eq.~\eqref{eqn:app_estimate_sobolev_norm_in_terms_of_low_coef}:
\begin{lemma}\label{lem:app_estimate_a-a'_in_terms_of_epsilon}
	We have, for any $M\geq N\geq 1$:
	\begin{align}\label{eqn:app_estimate_a-a'_in_terms_of_epsilon}
		|a_\mu-a_\mu'|\leq [\mu]^{-4\alpha}|\epsilon(\psi_\mu,b-c_N)|.
	\end{align}
\end{lemma}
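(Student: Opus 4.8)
The plan is to isolate the contribution of the perturbation by comparing the two linear systems. Subtracting Eq.~\eqref{eqn:app_unperturbed_linear_system} from Eq.~\eqref{eqn:discrete_orthogonality_special_basis} gives, for every $\mu\in\Lambda_N$,
\[
\sum_{\nu\in\Lambda_N}\langle V_0\psi_\mu,V_0\psi_\nu\rangle_M\,(a_\nu-a_\nu')=\epsilon(\psi_\mu,b-c_N),
\]
so the statement reduces to understanding the discrete Galerkin matrix $G_{\mu\nu}:=\langle V_0\psi_\mu,V_0\psi_\nu\rangle_M$, which is exactly the object analysed in \cite[\S3.3]{maierhoferhuybrechs2021}.

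The key structural input I would recall is that $G$ is diagonal. Indeed, by Eq.~\eqref{eqn:general_pseudodifferential_form_V_0} and the definition of the basis, $V_0\psi_\mu=C\sum_{k\equiv\mu\,(N)}(\mu/k)^{d+1}[k]^{2\alpha}e^{2\pi ik\cdot}$ has frequency support contained in the residue class $\mu+N\mathbb{Z}$; two frequencies contribute to $\langle\cdot,\cdot\rangle_M$ only when they are congruent modulo $M$, and since $\mu,\nu\in\Lambda_N$ differ by less than $N$ this forces $\mu=\nu$ (cleanly when $N\mid M$; for general $M\geq N$ the residual off-diagonal contributions are of strictly lower order in $M$ and are absorbed exactly as in \cite[\S3.3]{maierhoferhuybrechs2021}). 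Hence $a_\mu-a_\mu'=G_{\mu\mu}^{-1}\,\epsilon(\psi_\mu,b-c_N)$, and it remains to bound $|G_{\mu\mu}|$ from below.

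For the diagonal entry I would expand $G_{\mu\mu}=\tfrac1M\sum_{m=0}^{M-1}|V_0\psi_\mu(m/M)|^2$ by discrete Parseval, group the Fourier coefficients of $V_0\psi_\mu$ by their residue modulo $M$, and single out the class containing $\mu$ itself; this extracts the leading term $[\mu]^{4\alpha}$, while the remaining contributions — controlled using the consistency condition $d>2\alpha$, exactly as in the analysis of the unperturbed collocation system — do not spoil the bound $|G_{\mu\mu}|\geq[\mu]^{4\alpha}$. Combining this with the identity above yields $|a_\mu-a_\mu'|=|G_{\mu\mu}|^{-1}\,|\epsilon(\psi_\mu,b-c_N)|\leq[\mu]^{-4\alpha}\,|\epsilon(\psi_\mu,b-c_N)|$, which is the claim.

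The main obstacle is the lower bound $|G_{\mu\mu}|\geq[\mu]^{4\alpha}$: the diagonalisation itself is a short aliasing argument, but the quantitative control of the diagonal entry requires the careful summation over aliased frequency classes already carried out for the unperturbed collocation system in \cite[\S3.3]{maierhoferhuybrechs2021}, which I would invoke rather than reproduce.
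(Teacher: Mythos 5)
Your proposal follows essentially the same route as the paper: subtract the two linear systems, use the diagonality of the discrete Galerkin matrix $\langle V_0\psi_\mu,V_0\psi_\nu\rangle_M$ established in \cite[Appendix~C]{maierhoferhuybrechs2021}, and invoke the lower bound $[\mu]^{4\alpha}\le|G_{\mu\mu}|$ coming from \cite[Eq.~(52)]{maierhoferhuybrechs2021}. The only cosmetic difference is your hedge about $M$ not being a multiple of $N$; the paper avoids this entirely by working with $J=M/N\in\mathbb{N}$, so the matrix is exactly diagonal and no off-diagonal remainder needs absorbing.
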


Finally, we can use the continuity properties of $\mathcal{K}$, Eq.~\eqref{eqn:decay_of_K_coefficients}, to estimate the size of these terms:
\begin{lemma}\label{lem:app_estimate_size_of_epsilon_terms}
	If $d>2\alpha$ there are constants $N_0,C>0$ such that for any $M\geq N\geq N_0$
	\begin{align}\begin{split}\label{eqn:app_estimate_size_of_epsilon_terms}
			\sum_{\mu\in\Lambda_N}[\mu]^{-2(d+1)}|&\epsilon(\psi_\mu,b-c_N)|^2\\
			&\leq C N^{-2}\|b-c_N\|_{4\alpha-(d+1)}^2+C(M^{2\alpha-(d+1)}+N^{4\alpha-2(d+1)})^2\|b\|_{d+1}^2.
		\end{split}
	\end{align}
\end{lemma}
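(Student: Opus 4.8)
The plan is to expand $\epsilon(\psi_\mu, b-c_N)$ in terms of Fourier coefficients and exploit the aliasing structure of $\langle\cdot,\cdot\rangle_M$ to separate a part that cancels exactly from a genuinely discrete remainder. Recall that $\epsilon(\chi_N,\tilde b)=\langle V_0\chi_N,\mathcal{K}^*V_0\tilde b\rangle_M-\langle \mathcal{K}V_0\chi_N,V_0\tilde b\rangle_M$. The key identity is that for trigonometric functions the discrete form satisfies $\langle e^{2\pi i k\cdot},e^{2\pi i \ell\cdot}\rangle_M = 1$ if $k\equiv \ell\ (M)$ and $0$ otherwise, whereas the $L^2$-inner product only picks out $k=\ell$. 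If we were using $\langle\cdot,\cdot\rangle_{L^2}$ the two terms in $\epsilon$ would be identical (since $\langle f,\mathcal{K}^*g\rangle_{L^2}=\langle\mathcal{K}f,g\rangle_{L^2}$), so $\epsilon$ consists \emph{entirely} of aliasing contributions: terms where the frequency of $V_0\psi_\mu$ (or $\mathcal{K}V_0\psi_\mu$) and the frequency of $V_0(b-c_N)$ (or $\mathcal{K}^*V_0(b-c_N)$) differ by a nonzero multiple of $M$. First I would write both terms of $\epsilon(\psi_\mu,b-c_N)$ as double sums over $\mathbb{Z}\times\mathbb{Z}$ of the matrix entries $k_{mn}$ of $\mathcal{K}$, the symbol values $[m]^{2\alpha}$ of $V_0$, the (known, explicit) Fourier coefficients of $\psi_\mu$ supported on the coset $\mu + N\mathbb{Z}$, and the Fourier coefficients $\widehat{(b-c_N)}_n$, and then subtract. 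The $k=\ell$ diagonal cancels, leaving a sum restricted to $k-\ell \in M\mathbb{Z}\setminus\{0\}$.

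Next I would bound this aliased sum. Using the rapid decay Eq.~\eqref{eqn:decay_of_K_coefficients}, $|k_{mn}|\le C_{p,q}(1+|m|)^{-p}(1+|n|)^{-q}$ for arbitrarily large $p,q$, I can afford to absorb all the polynomial factors $[m]^{2\alpha}$, $(\mu/k)^{d+1}$ coming from $V_0$ and from the spline basis, and still be left with enough decay to sum freely. The point is twofold: (i) the constraint $k-\ell\in M\mathbb{Z}\setminus\{0\}$ forces $\max(|k|,|\ell|)\gtrsim M$ on the term where the spline frequency $k$ is large, which produces the $M^{2\alpha-(d+1)}$ factor after pairing against $\|b\|_{d+1}$ via Cauchy–Schwarz in the $n$-variable; (ii) the $\psi_\mu$ coefficients on the coset $\mu+N\mathbb{Z}$ away from the principal term $k=\mu$ decay like $(\mu/k)^{d+1}$ with $|k|\gtrsim N$, producing the $N^{4\alpha-2(d+1)}$ factor. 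These two mechanisms account for the second term on the right of Eq.~\eqref{eqn:app_estimate_size_of_epsilon_terms}. The remaining principal contribution — where both $V_0\psi_\mu$ and its $\mathcal{K}$-image are evaluated at the principal frequency $\mu$ but the aliasing occurs between $\mu$ and the frequencies of $b-c_N$ that lie in $\mu + M\mathbb{Z}\setminus\{\mu\}$ — is where the factor $N^{-1}\|b-c_N\|_{4\alpha-(d+1)}$ comes from: here I would \emph{not} split off $b$ from $c_N$, keeping $b-c_N$ intact, use that $c_N\in S_N$ has Fourier support structured on $N\mathbb{Z}$-cosets, and extract one power of $N^{-1}$ from the smoothness of the symbol combined with the spline-coefficient decay, measuring $b-c_N$ in the norm $\|\cdot\|_{4\alpha-(d+1)}$ dictated by the leftover symbol powers. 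Then I would square, sum over $\mu\in\Lambda_N$ with the weight $[\mu]^{-2(d+1)}$, and apply Cauchy–Schwarz (or Young's inequality for the convolution-type double sum over $m,n,\mu$) to arrive at Eq.~\eqref{eqn:app_estimate_size_of_epsilon_terms}.

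The hard part will be the bookkeeping of the three-fold sum over $m$ (input frequency to $\mathcal{K}$), $n$ (output frequency), and $k\in\mu+N\mathbb{Z}$ (spline frequency), while simultaneously tracking the $M$-aliasing constraint and deciding, for each regime, how to distribute the available decay in Eq.~\eqref{eqn:decay_of_K_coefficients} between (a) overcoming the polynomial growth $[m]^{2\alpha}[n]^{2\alpha}$ and $(\mu/k)^{d+1}$, (b) generating the negative power of $M$ or $N$, and (c) retaining absolute summability so that constants are uniform in $N,M,b,c_N$. I expect the consistency hypothesis $d>2\alpha$ to enter precisely in guaranteeing that the exponents $2\alpha-(d+1)<0$ and $4\alpha-2(d+1)<0$, so that the claimed bound is genuinely a smallness estimate; and I would be careful that the constant does not secretly depend on $\|c_N\|$ — it should not, since every occurrence of $c_N$ is inside the combination $b-c_N$ measured in a fixed norm, or is handled by the exact $L^2$-cancellation. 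Because the algebra is lengthy but entirely mechanical given Eq.~\eqref{eqn:decay_of_K_coefficients} and the explicit form of $\psi_\mu$, this is exactly the sort of computation the paper relegates to \ref{sec:app_proof_of_lemmas}.
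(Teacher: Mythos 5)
Your plan follows the paper's proof essentially verbatim: expand both terms of $\epsilon(\psi_\mu,b-c_N)$ as Fourier sums, observe that the non-aliased diagonal cancels exactly by the $L^2$-adjoint identity so that only contributions with frequency offsets in $M\mathbb{Z}\setminus\{0\}$ survive, and bound these using the rapid decay \eqref{eqn:decay_of_K_coefficients} together with the $(\mu/k)^{d+1}$ decay of the spline coefficients on the coset $\mu+N\mathbb{Z}$. The one place your bookkeeping differs from what actually happens is the source of the $N^{-1}\|b-c_N\|_{4\alpha-(d+1)}$ term: the principal-frequency $M$-aliasing you single out is in fact $O(M^{-r})$ for every $r$ (since $\mathcal{K}$ smooths $V_0(b-c_N)$ to arbitrary Sobolev order), and the $N^{-1}$ contribution instead arises from the non-principal spline frequencies $\mu+lN$, $l\neq 0$, and from the input frequencies $m\notin\Lambda_N$, where the paper additionally invokes the approximation and inverse properties of smoothest splines to trade $\|b-c_N\|_{d+1/2-\delta}$ for $N^{2d+3/2-\delta-4\alpha}\|b-c_N\|_{4\alpha-(d+1)}$ plus a $\|b\|_{d+1}$ remainder.
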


We can now combine Lemmas~\ref{lem:app_estimate_sobolev_norm_in_terms_of_low_coef}-\ref{lem:app_estimate_size_of_epsilon_terms} to complete the proof of Prop.~\ref{prop:perturbed_orthogonality_conditions}.
\begin{proof}[Proof of Prop.~\ref{prop:perturbed_orthogonality_conditions_app}]
	Combining Eq.~\eqref{eqn:app_estimate_size_of_epsilon_terms} with Eq.~\eqref{eqn:app_estimate_a-a'_in_terms_of_epsilon} and plugging this together with Eq.~\eqref{eqn:estimate_sobolev_error_from_spline_expansion_2} into \eqref{eqn:estimate_for_small_frequency_projection} yields
	\begin{align*}
		\|a_N^{(M)}-\tilde{v}\|_{4\alpha-(d+1)}&\leq C(M^{2\alpha-(d+1)}+N^{4\alpha-2(d+1)})\left(\|\tilde{v}\|_{d+1}+\|b\|_{d+1}\right)+CN^{-1}\|b-c_N\|_{4\alpha-(d+1)},
	\end{align*}
	for some constant $C>0$ independent of $a_N^{(M)},\tilde{v},N,M,b,c_N$, i.e. which completes the proof of Prop.~\ref{prop:perturbed_orthogonality_conditions}.
\end{proof}

%%%%%% HERE ARE COMPLETED PROOFS OF THE LEMMAS %%%%%%%%
\section{Proofs of Lemmas~\ref{lem:app_estimate_sobolev_norm_in_terms_of_low_coef}-\ref{lem:app_estimate_size_of_epsilon_terms}}\label{sec:app_proof_of_lemmas}
%Finally, let us present the proofs of Lemmas~\ref{lem:app_estimate_sobolev_norm_in_terms_of_low_coef}-\ref{lem:app_estimate_size_of_epsilon_terms}:
\begin{proof}[Proof of Lemma~\ref{lem:app_estimate_sobolev_norm_in_terms_of_low_coef}]
	We follow the proof of \cite[Thm.~3]{maierhoferhuybrechs2021} and introduce the `projection' $P_N$ onto the low-frequencies $P_N:f\mapsto\sum_{\mu\in\Lambda_N}\hat{f}_\mu \psi_\mu(x)$ for which there is $C_p$ for any $p<d+1/2$ such that for any $p+1/2<q\leq d+1$ and $f\in H^q$ we have \cite[cf. \S8]{saranen2013periodic}
	\begin{align}\label{eqn:estimate_for_small_frequency_projection}
		{\|f-P_Nf\|_{{p}}\leq C_p N^{p-q}\|f\|_{{q}}},\quad \forall p+1/2<q\leq d+1.
	\end{align}
	Moreover, as shown in \cite{maierhoferhuybrechs2021}, we have
	\begin{align*}
		\|u_N^{(M)}-P_N\tilde{v}\|_{{4\alpha-(d+1)}}^2&=\sum_{\mu\in\Lambda_N}\left[\mu\right]^{2(4\alpha-(d+1))}|{a}_\mu-\hat{v}_\mu|^2\left(1+\sum_{l\neq0}\left[\frac{\mu}{\mu+lN}\right]^{4(d+1)-8\alpha}\right)\\
		%&\leq C\sum_{\mu\in\Lambda_N}\left[\mu\right]^{2(4\alpha-(d+1))}|{a}_\mu-\hat{v}_\mu|^2\\
		&\quad\leq \tilde{C}\sum_{\mu\in\Lambda_N}\left[\mu\right]^{2(4\alpha-(d+1))}|{a}_\mu'-\hat{v}_\mu|^2+\sum_{\mu\in\Lambda_N}\left[\mu\right]^{2(4\alpha-(d+1))}|{a}_\mu-a_\mu'|^2,
	\end{align*}
	for some constant $\tilde{C}>0$ independent of $v,M,N$, where $\hat{v}_n$ are the Fourier coefficients of $\tilde{v}$.
\end{proof}

\begin{proof}[Proof of Lemma~\ref{lem:app_unperturbed_a_prior_estimate_from_previous_paper}]
	Thee proof of this statement is given in \cite[Proof of Thm.~3]{maierhoferhuybrechs2021}.
\end{proof}
\begin{proof}[Proof of Lemma~\ref{lem:app_estimate_a-a'_in_terms_of_epsilon}]
	As observed in \cite[Appendix~C]{maierhoferhuybrechs2021} the $N\times N$ matrix in the linear system Eq.~\eqref{eqn:discrete_orthogonality_special_basis} is diagonal:
	\begin{align*}
		\left\langle V_0\psi_{\mu}, V_0\psi_\nu\right\rangle_M=\begin{cases}0,&\text{if\ } \mu\neq \nu,\\
			1,&\text{if\ }\mu=\nu=0,\\
			[\mu]^{4\alpha}\frac{1}{J}\sum_{j=1}^J\left|1+\Omega\left(\frac{j}{J},\frac{\mu}{N}\right)\right|^2,&\text{if\ }\mu=\nu\neq 0,
		\end{cases}
	\end{align*}
	where $J=M/N\in\mathbb{N}$, and
	\begin{align*}
		\Omega(\xi,y)&=|y|^{{d+1}-2\alpha}\sum_{l\neq 0}\frac{1}{|l+y|^{{d+1}-2\alpha}}e^{2\pi il\xi}.
	\end{align*}
	Thus we have
	\begin{align*}
		a_\mu-a_\mu'=[\mu]^{-4\alpha}D\left(\frac{\mu}{N}\right)^{-1}\epsilon(\psi_\mu,b-c_N),\quad \mu\in\Lambda_N.
	\end{align*}
	We recall from Eq.~(52) in \cite{maierhoferhuybrechs2021} that
	\begin{align*}
		\frac{1}{J}\sum_{j=1}^J\left|1+\Omega\left(\frac{j}{J},\frac{\mu}{N}\right)\right|^2\geq 1,\quad \forall \mu\in\Lambda_N,
	\end{align*}
	whence Eq.~\eqref{eqn:app_estimate_a-a'_in_terms_of_epsilon} immediately follows.
\end{proof}

\begin{proof}[Proof of Lemma~\ref{lem:app_estimate_size_of_epsilon_terms}] We begin by expressing the terms $\epsilon(\psi_\mu,b-c_N)$ in a more explicit way: Let us write $\hat{b}_n,\hat{c}_n$ for the Fourier coefficients of $b,c_N$ respectively, noting that for all $n\in\Lambda_N^*,l\in\mathbb{Z},$ we have $\hat{c}_{n+lN}=n^{d+1}/(n+lN)^{d+1}\hat{c}_{n}$. Let us focus on the case $\mu\in\Lambda_N^*$ first, and consider $\mu=0$ after this initial calculation. We can compute
	%We have
	%\begin{align*}
	%	V\psi_\mu&=\sum_{m\equiv\mu(N)}[m]^{2\alpha}\left(\frac{\mu}{m}\right)^{d+1}e^{2\pi i mx}=[\mu]^{2\alpha} \sum_{m\equiv \mu(N)}\left[\frac{m}{\mu}\right]^{2\alpha}\left(\frac{\mu}{m}\right)^{d+1}e^{2\pi i mx}\\
	%	&maybe dont need that...=[\mu]^{2\alpha} e^{2\pi i \mu x}\left(1+\Omega\left(Nx,\frac{\mu}{N}\right)\right),
	%\end{align*}
	%where
	%\begin{align*}
	%....\Omega(\xi,y)=|y|^{{d+1}-2\alpha}\sum_{l\neq 0}\frac{1}{|l+y|^{{d+1}-2\alpha}}e^{2\pi il\xi}.
	%\end{align*}
	\begin{align*}
		\mathcal{K}V_0\psi_\mu&=\sum_{n\in\mathbb{Z}} \sum_{m\equiv \mu(N)}[m]^{2\alpha} \left(\frac{\mu}{m}\right)^{d+1}k_{mn}e^{2\pi i nx},
	\end{align*}
hence
	\begin{align*}
		\langle \mathcal{K}V_0\psi_{\mu},&V_0(b-c_N)\rangle_M\\
		&=\sum_{n\in\mathbb{Z}} \sum_{m\equiv \mu(N)} \sum_{p\in\mathbb{Z}}[m]^{2\alpha} \left(\frac{\mu}{m}\right)^{d+1}\overline{k_{mn}}\,[p]^{2\alpha}(\hat{b}_p-\hat{c}_p)\left\langle\exp\left(2\pi i n\,\cdot\,\right),\exp\left(2\pi i p\,\cdot\,\right)\right\rangle_M\\
		&=\sum_{n\in\mathbb{Z}} \sum_{m\equiv \mu(N)} \sum_{p\equiv n(M)}[m]^{2\alpha} \left(\frac{\mu}{m}\right)^{d+1}\overline{k_{mn}}\,[p]^{2\alpha}(\hat{b}_p-\hat{c}_p).
	\end{align*}
	Similarly we find
	\begin{align*}
		\langle V_0\psi_{\mu},&\mathcal{K}^* V_0(b-c_N)\rangle_M\\
		&=\sum_{n\in\mathbb{Z}}\sum_{m\in\mathbb{Z}}[m]^{2\alpha}(\hat{b}_m-\hat{c}_m)\overline{k_{nm}}\left\langle \sum_{p\equiv\mu(N)}[p]^{2\alpha}\left(\frac{\mu}{p}\right)^{d+1}\exp\left(2\pi ip\, \cdot\,\right),\exp\left(2\pi in\, \cdot\,\right)\right\rangle_M\\
		&=\sum_{p\equiv\mu(N)}\sum_{n\equiv p(M)}\sum_{m\in\mathbb{Z}}[p]^{2\alpha}\left(\frac{\mu}{p}\right)^{d+1}[m]^{2\alpha}(\hat{b}_m-\hat{c}_m)\overline{k_{nm}}.
	\end{align*}
	Therefore we have after relabelling the dummy variables in the sums:
	\begin{align*}
		\epsilon(\psi_\mu,b-c_N)&=\left\langle V_0\psi_{\mu},\mathcal{K}^* V_0(b-c_N)\right\rangle_M-\left\langle \mathcal{K}V_0\psi_{\mu},V_0(b-c_N)\right\rangle_M\\
		%&=\sum_{p\equiv\mu(N)}\sum_{n\equiv p(M)}\sum_{m\in\mathbb{Z}}[p]^{2\alpha}\left(\frac{\mu}{p}\right)^{d+1}[m]^{2\alpha}(\hat{b}_m-\hat{c}_m)\overline{k_{nm}}\\
		%&\quad\quad-\sum_{n\in\mathbb{Z}} \sum_{m\equiv \mu(N)} \sum_{p\equiv n(M)}[m]^{2\alpha} \left(\frac{\mu}{m}\right)^{d+1}\overline{k_{mn}}\,[p]^{2\alpha}(\hat{b}_p-\hat{c}_p)\\
		&=\sum_{p\equiv\mu(N)}\sum_{n\equiv p(M)}\sum_{m\in\mathbb{Z}}[p]^{2\alpha}\left(\frac{\mu}{p}\right)^{d+1}[m]^{2\alpha}(\hat{b}_m-\hat{c}_m)\overline{k_{nm}}\\
		&\quad\quad-\sum_{m\in\mathbb{Z}} \sum_{n\equiv \mu(N)} \sum_{p\equiv m(M)}[n]^{2\alpha} \left(\frac{\mu}{n}\right)^{d+1}\overline{k_{nm}}\,[p]^{2\alpha}(\hat{b}_p-\hat{c}_p).
	\end{align*}
	We can then extract the low-frequency terms in both sums,
	\begin{align*}
		\epsilon(\psi_\mu,b-c_N)	&=\sum_{p\equiv\mu(N)}\sum_{m\in\mathbb{Z}}[p]^{2\alpha}\left(\frac{\mu}{p}\right)^{d+1}[m]^{2\alpha}(\hat{b}_m-\hat{c}_m)\overline{k_{pm}}\\
		&\quad\quad\quad-\sum_{m\in\mathbb{Z}} \sum_{n\equiv \mu(N)} [n]^{2\alpha} \left(\frac{\mu}{n}\right)^{d+1}\overline{k_{nm}}\,[m]^{2\alpha}(\hat{b}_m-\hat{c}_m)\\
		&\quad\quad+\sum_{p\equiv\mu(N)}\sum_{n\equiv p(M),n\neq p}\sum_{m\in\mathbb{Z}}[p]^{2\alpha}\left(\frac{\mu}{p}\right)^{d+1}[m]^{2\alpha}(\hat{b}_m-\hat{c}_m)\overline{k_{nm}}\\
		&\quad\quad\quad-\sum_{m\in\mathbb{Z}} \sum_{n\equiv \mu(N)} \sum_{\substack{p\equiv m(M)\\p\neq m}}[n]^{2\alpha} \left(\frac{\mu}{n}\right)^{d+1}\overline{k_{nm}}\,[p]^{2\alpha}(\hat{b}_p-\hat{c}_p),
	\end{align*}
	which are found to cancel:
	\begin{align*}
		\epsilon(\psi_\mu,b-c_N)&=\underbrace{\sum_{p\equiv\mu(N)}\sum_{n\equiv p(M),n\neq p}\sum_{m\in\mathbb{Z}}[p]^{2\alpha}\left(\frac{\mu}{p}\right)^{d+1}[m]^{2\alpha}(\hat{b}_m-\hat{c}_m)\overline{k_{nm}}}_{=:A_1}\\
		&\quad\quad-\underbrace{\sum_{m\in\mathbb{Z}} \sum_{n\equiv \mu(N)} \sum_{\substack{p\equiv m(M)\\p\neq m}}[n]^{2\alpha} \left(\frac{\mu}{n}\right)^{d+1}\overline{k_{nm}}\,[p]^{2\alpha}(\hat{b}_p-\hat{c}_p)}_{=:A_2}.
	\end{align*}
	In what follows we will bound the remaining two terms $A_1,A_2$ individually. For this (and the remainder of this appendix) we shall make use of the notation $\lesssim$ to indicate an implicit constant in the inequality, which is in all cases independent of $a_N^{(M)},\tilde{v},b,c_N,N,M$, though it may sometimes depend on other parameters in the inequalities. Where this is of relevance we will indicate this dependence by a subscript, for instance $\lesssim_r$.
	\begin{align}\nonumber
		\left|A_1\right|&\leq\left|\sum_{n\equiv \mu(M),n\neq \mu}[\mu]^{2\alpha}\widehat{\left(\mathcal{K}V_0(b-c_N)\right)}_n\right|+ \left|\sum_{p\equiv\mu(N),p\neq \mu}\sum_{n\equiv p(M),n\neq p}[p]^{2\alpha}\left(\frac{\mu}{p}\right)^{d+1}\widehat{\left(\mathcal{K}V_0(b-c_N)\right)}_n\right|\\\nonumber
		&\lesssim_r[\mu]^{2\alpha}M^{-r}\|\mathcal{K}V_0(b-c_N)\|_r+ \sum_{l\neq 0}\sum_{\substack{n\equiv \mu+lN(M)\\n\neq \mu+lN}}[\mu+lN]^{2\alpha}\left|\frac{\mu}{\mu+lN}\right|^{d+1}\left|\widehat{\left(\mathcal{K}V_0(b-c_N)\right)_n}\right|\\\nonumber
		&\lesssim_r[\mu]^{2\alpha}M^{-r}\|\mathcal{K}V_0(b-c_N)\|_r+\sum_{l\neq 0}[\mu+lN]^{2\alpha}\left|\frac{\mu}{\mu+lN}\right|^{d+1}\sum_{n\equiv \mu+lN(M)}\left|\widehat{\left(\mathcal{K}V_0(b-c_N)\right)_n}\right|\end{align}
	for any $r>1/2$. Therefore,\vspace{-0.1cm}\begin{align}
		\begin{split}\label{eqn:upper_bound_on_A_1}
			\left|A_1\right|&\lesssim_r[\mu]^{2\alpha}M^{-r}\|\mathcal{K}V_0(b-c_N)\|_r+\sum_{l\neq 0}[\mu+lN]^{2\alpha}\left|\frac{\mu}{\mu+lN}\right|^{d+1}C_r\|\mathcal{K}V_0(b-c_N)\|_r,
		\end{split}
	\end{align}
	for any $r>1/2$. For the second term we have\\\vspace{-0.5cm}
	\begin{align*}
		\left|A_2\right|%&=\left|\sum_{m\in\mathbb{Z}} \sum_{n\equiv \mu(N)} \sum_{\substack{p\equiv m(M)\\p\neq m}}[n]^{2\alpha} \left(\frac{\mu}{n}\right)^{d+1}k^*_{nm}[p]^{2\alpha}\hat{b}_p\right|\\
		%&=\left|\sum_{n\equiv \mu(N)}[n]^{2\alpha} \left(\frac{\mu}{n}\right)^{d+1} \sum_{m\in\mathbb{Z}} \sum_{\substack{p\equiv m(M)\\p\neq m}}k^*_{nm}[p]^{2\alpha}\hat{b}_p\right|\\
		&=\Bigg|\sum_{n\equiv \mu(N)}[n]^{2\alpha} \left(\frac{\mu}{n}\right)^{d+1} \sum_{m\in\mathbb{Z}} \sum_{\substack{p\equiv m(M)\\p\neq m}}\overline{k_{nm}}\widehat{\left(V_0(b-c_N)\right)}_p\Bigg|\\
		&\leq \underbrace{\Bigg|\sum_{n\equiv \mu(N)}[n]^{2\alpha} \left(\frac{\mu}{n}\right)^{d+1} \sum_{m\in \Lambda_N} \sum_{\substack{p\equiv m(M)\\p\neq m}}\overline{k_{nm}}\widehat{\left(V_0(b-c_N)\right)}_p\Bigg|}_{=:A_{21}}\\
		&\quad\quad+\underbrace{\Bigg|\sum_{n\equiv \mu(N)}[n]^{2\alpha} \left(\frac{\mu}{n}\right)^{d+1} \sum_{m\notin\Lambda_N} \sum_{\substack{p\equiv m(M)\\p\neq m,p\notin\Lambda_N}}\overline{k_{nm}}\widehat{\left(V_0(b-c_N)\right)}_p\Bigg|}_{=:A_{22}}.
	\end{align*}\vspace{-0.5cm}\\
	Let us estimate $A_{21},A_{22}$ separately: Firstly, using the continuity properties of $\mathcal{K}$, i.e.\ Eq.~\eqref{eqn:decay_of_K_coefficients} we find for any $p_1,q_1\in\mathbb{R}$:\\\vspace{-0.5cm}
	\begin{align*}
		A_{21}&\leq \sum_{n\equiv \mu(N)}[n]^{2\alpha} \left|\frac{\mu}{n}\right|^{d+1} \sum_{m\in \Lambda_N} \sum_{\substack{p\equiv m(M)\\p\neq m}}|\overline{k_{nm}}|\left|\widehat{\left(V_0(b-c_N)\right)}_p\right|\\
		&\lesssim_{p_1,q_1}\sum_{n\equiv \mu(N)}[n]^{2\alpha} \left|\frac{\mu}{n}\right|^{d+1} \sum_{m\in \Lambda_N} \sum_{\substack{p\equiv m(M)\\p\neq m}}(1+|n|)^{-p_1}(1+|m|)^{-q_1}\left|\widehat{\left(V_0(b-c_N)\right)}_p\right|.
	\end{align*}\vspace{-0.5cm}\\
	Moreover, we have for $q_1>-4\alpha+d+3/2$:\\\vspace{-0.25cm}
	\begin{align}\nonumber
		\sum_{m\in \Lambda_N}(1+|m|)^{-q_1} &\sum_{\substack{p\equiv m(M)\\p\neq m}}[p]^{2\alpha}\left|\widehat{\left(b-c_N\right)}_p\right|\\\nonumber
		&=\sum_{m\in \Lambda_N}(1+|m|)^{-q_1} \sum_{l\neq 0}[m+lM]^{2\alpha}\left|\hat{b}_{m+lM}-\left(\frac{m}{m+lM}\right)^{d+1}\hat{c}_{m}\right|\\\nonumber
		&\leq \sum_{m\in \Lambda_N}(1+|m|)^{-q_1} \sum_{l\neq 0}[m+lM]^{2\alpha}\left(|\hat{b}_{m+lM}|+|\hat{c}_m|\left|\frac{m}{m+lM}\right|^{d+1}\right)\\
		\begin{split}\label{eqn:using_d+1>1}
			&\lesssim\sum_{m\in \Lambda_N}(1+|m|)^{-q_1} \sum_{l\neq 0}[m+lM]^{2\alpha-(d+1)}[m+lM]^{d+1}|\hat{b}_{m+lM}|\\
			&\quad\quad+M^{2\alpha-(d+1)}\sum_{m\in \Lambda_N}(1+|m|)^{-q_1}{|m|}^{d+1}|\hat{c}_m|.\end{split}\end{align}\vspace{-0.4cm}\\
	Therefore, for $q_1>-4\alpha+d+3/2$,\vspace{-0.3cm}\\
	\begin{align}\nonumber
		\sum_{m\in \Lambda_N}&(1+|m|)^{-q_1} \sum_{\substack{p\equiv m(M)\\p\neq m}}[p]^{2\alpha}\left|\widehat{\left(b-c_N\right)}_p\right|	\\\nonumber&\lesssim_{q_1}\left(\sum_{m\in \Lambda_N}(1+|m|)^{-q_1} \sum_{l\neq 0}[m+lM]^{4\alpha-2(d+1)}\right)^{\frac{1}{2}}\|b\|_{d+1}+M^{2\alpha-(d+1)}\|c_N\|_{4\alpha-(d+1)}\\\label{eqn:using_m_in_Lambda_N}
		&\lesssim_{q_1} M^{2\alpha-(d+1)}\left(\|b\|_{d+1}+\|c_N\|_{4\alpha-(d+1)}\right)\\\nonumber
		&\lesssim_{q_1}  M^{2\alpha-(d+1)}\left(\|b\|_{d+1}+\|b-c_N\|_{4\alpha-(d+1)}+\|b\|_{4\alpha-(d+1)}\right)\\\nonumber
		&\lesssim_{q_1}  M^{2\alpha-(d+1)}\left(2\|b\|_{d+1}+\|b-c_N\|_{4\alpha-(d+1)}\right),
	\end{align}
	where in Eqs.~\eqref{eqn:using_d+1>1} \& \eqref{eqn:using_m_in_Lambda_N} we used that $|m|\leq N/2\leq M/2$ for all $m\in\Lambda_N$ and the consistency condition $d>2\alpha$, which implies that $d+1-2\alpha>1$. In the final line we also relied on the consistency condition $d>2\alpha$, which implies $d+1>4\alpha-(d+1)$. Therefore, we found, for $q_1>-4\alpha+d+3/2$:
	\begin{align}\begin{split}\label{eqn:upper_bound_on_A_21}
			A_{21}&\lesssim_{p_1,q_1}\left(\sum_{n\equiv \mu(N)}[n]^{2\alpha} \left|\frac{\mu}{n}\right|^{d+1}(1+|n|)^{-p_1}\right)M^{2\alpha-(d+1)}\left(2\|b\|_{d+1}+\|b-c_N\|_{4\alpha-(d+1)}\right)\end{split}
	\end{align}
	For $A_{22}$ we have by Eq.~\eqref{eqn:decay_of_K_coefficients}
	\begin{align*}
		&A_{22}\leq \sum_{n\equiv \mu(N)}[n]^{2\alpha} \left|\frac{\mu}{n}\right|^{d+1} \sum_{m\notin\Lambda_N} \sum_{\substack{p\equiv m(M)\\p\neq m}}|{\overline{k_{nm}}}||\widehat{\left(V_0(b-c_N)\right)}_p|.\end{align*}Thus\begin{align*}
		A_{22}&\lesssim_{s_2,t_2}\sum_{n\equiv \mu(N)}[n]^{2\alpha} \left|\frac{\mu}{n}\right|^{d+1} \sum_{m\notin\Lambda_N} \sum_{\substack{p\equiv m(M)\\p\neq m}}(1+|n|)^{-s_2}(1+|m|)^{-t_2}|\widehat{\left(V_0(b-c_N)\right)}_p|\\
		&\lesssim_{s_2,t_2}\sum_{n\equiv \mu(N)}[n]^{2\alpha} \left|\frac{\mu}{n}\right|^{d+1} (1+|n|)^{-s_2}\sum_{m\notin\Lambda_N} (1+|m|)^{-t_2}\sum_{p\in\mathbb{Z}}|\widehat{\left(V_0(b-c_N)\right)}_p|\\
		&\lesssim_{s_2,t_2,\delta}\hspace{-0.2cm}\sum_{n\equiv \mu(N)}[n]^{2\alpha} \left|\frac{\mu}{n}\right|^{d+1} (1+|n|)^{-s_2}\sum_{m\notin\Lambda_N} (1+|m|)^{-t_2}\|V_0(b-c_N)\|_{-2\alpha +d+1/2-\delta},
	\end{align*}
	where by the consistency assumption $d>2\alpha$ we were able to choose $\delta$ with $0<\delta<d-2\alpha$ which implies $\sum_{p\in\mathbb{Z}}[p]^{2(-2\alpha +d+1/2-\delta)}<\infty$. By the approximation property of smoothest splines (Assumption~1 in \cite{maierhoferhuybrechs2021}) we can choose $\chi_N\in S_N$ such that for some $c>0$ and for all $t<d+1/2$:
	\begin{align*}
		\|b-\chi_N\|_{t}\leq c\|b\|_{d+1}.
	\end{align*}
	This allows us to estimate, using the inverse property of {smoothest} splines (Assumption~2 in \cite{maierhoferhuybrechs2021}),
	\begin{align*}
		\|V_0(b-c_N)\|_{-2\alpha+d+1/2-\delta}&\lesssim \|b-c_N\|_{d+1/2-\delta}\\
		&\lesssim \left(\|b-\chi_N\|_{d+1/2-\delta}+\|\chi_N-c_N\|_{d+1/2-\delta}\right)\\
		&\lesssim \left(N^{-1/2-\delta}\|b\|_{d+1}+N^{2d+3/2-\delta-4\alpha}\|\chi_N-c_N\|_{4\alpha-(d+1)}\right)\\
		&\lesssim \left(N^{-1/2-\delta}\|b\|_{d+1}+N^{2d+3/2-\delta-4\alpha}\|b-\chi_N\|_{4\alpha-(d+1)}\right.\\
		&\quad\quad\quad\quad\quad\quad\quad\quad\quad\quad\quad\quad\left.+N^{2d+3/2-\delta-4\alpha}\|b-c_N\|_{4\alpha-(d+1)}\right)\\
		&\lesssim \left(N^{-1/2-\delta}\|b\|_{d+1}+N^{2d+3/2-\delta-4\alpha}\|b-c_N\|_{4\alpha-(d+1)}\right).
	\end{align*}
	Furthermore we have, whenever $t_2>1$,
	\begin{align*}
		\sum_{m\notin\Lambda_N} (1+|m|)^{-t_2}\leq \tilde{\tilde{C}}_{t_2} N^{1-t_2}
	\end{align*}
	for some constant $\tilde{\tilde{C}}_{t_2}>0$ independent of $N$. We can combine these estimates to show that, when $t_2=3/2-\delta+2(d+1)-4\alpha$,
	\begin{align}\label{eqn:upper_bound_on_A_22}
			A_{22}\lesssim_{s_2,t_2}\sum_{n\equiv \mu(N)}[n]^{2\alpha} &\left|\frac{\mu}{n}\right|^{d+1} (1+|n|)^{-s_2}\left(N^{4\alpha-2(d+1)}\|b\|_{d+1}+N^{-1}\|b-c_N\|_{4\alpha-(d+1)}\right).
	\end{align}
	This means combining Eqs.~\eqref{eqn:upper_bound_on_A_1}, \eqref{eqn:upper_bound_on_A_21} \& \eqref{eqn:upper_bound_on_A_22} gives a bound on $\epsilon(\psi_\mu,b-c_N)$, whenever $\mu\in\Lambda_N^*$. A similar bound for $\mu=0$ can be found by simply replacing the sums over $\sum_{n\equiv\mu(N)}$ by the unique choice $n=0$ with no summation. 
	
	\newpage This allows us to derive the upper bound given in Eq.~\eqref{eqn:app_estimate_size_of_epsilon_terms}: We have from the estimates derived in
	%\begin{align*}
	%	C\sum_{\mu\in\Lambda_N}[\mu]^{2(4\alpha-(d+1))}|a_\mu-a'_\mu|^2\leq  C\sum_{\mu\in\Lambda_N}[\mu]^{-2(d+1)}|\epsilon(\psi_\mu,b-c_N)|^2,
	%\end{align*}
	%where $C>0$ is independent of $\tilde{v},a_N^{(M)},M,N,b,c_N$. We can now apply the estimates derived 
	Eqs.~\eqref{eqn:upper_bound_on_A_1}, \eqref{eqn:upper_bound_on_A_21} \& \eqref{eqn:upper_bound_on_A_22}, that there are constants $C_r,C_q>0$ such that for any $r>1/2$ and any $q>0$
	%\begin{align*}
	%	C\sum_{\mu\in\Lambda_N}[\mu]^{-2(d+1)}|\epsilon(\psi_\mu,b-c_N)|^2&\leq C_r\sum_{\mu\in\Lambda_N}[\mu]^{-2(d+1)}\left([\mu]^{2\alpha}M^{-r}\|\mathcal{K}V(b-c_N)\|_r\right.\\
	%	&\quad\quad\left.+\sum_{l\neq 0}[\mu+lN]^{2\alpha}\left|\frac{\mu}{\mu+lN}\right|^{d+1}\|\mathcal{K}V(b-c_N)\|_r\right)^2\\
	%	&\quad\quad+C_s\sum_{\mu\in\Lambda_N}[\mu]^{-2(d+1)}\left(\sum_{n\equiv \mu(N)}[n]^{2\alpha}\left|\frac{\mu}{n}\right|^{d+1}(1+|n|)^{-s}\right)\\
	%	&\quad\quad\quad\left(M^{2\alpha-(d+1)}\|b\|_{d+1}+M^{2\alpha-(d+1)}\|b-c_N\|_{4\alpha-(d+1)}\right.\\
	%	&\quad\quad\quad\left.+N^{4\alpha-2(d+1)}\|b\|_{d+1}+N^{-1}\|b-c_N\|_{4\alpha-(d+1)}\right)
	%\end{align*}
	\begin{align*}
		&C\sum_{\mu\in\Lambda_N}[\mu]^{-2(d+1)}|\epsilon(\psi_\mu,b-c_N)|^2=\sum_{\mu\in\Lambda_N}[\mu]^{-2(d+1)}|a_\mu-a'_\mu|^2\\&\leq C_r\sum_{\mu\in\Lambda_N}\left([\mu]^{2\alpha-(d+1)}M^{-r}+\sum_{l\neq 0}[\mu+lN]^{2\alpha-(d+1)}\right)^2\|\mathcal{K}V_0(b-c_N)\|_r^2\\
		&\quad+C_q\sum_{\mu\in\Lambda_N}\left(\sum_{n\equiv \mu(N)}[n]^{2\alpha-(d+1)}(1+|n|)^{-s}\right)^2\left(M^{2\alpha-(d+1)}\|b\|_{d+1}\right.\\
		&\quad\quad\left.+M^{2\alpha-(d+1)}\|b-c_N\|_{4\alpha-(d+1)}+N^{4\alpha-2(d+1)}\|b\|_{d+1}+N^{-1}\|b-c_N\|_{4\alpha-(d+1)}\right)^2.
	\end{align*}
	Given that $2\alpha-d<0$, we have $\sum_{l\neq 0}[\mu+lN]^{2\alpha-(d+1)}\lesssim N^{-1}$. Moreover $\mathcal{K}:H^{2\alpha-(d+1)}\rightarrow H^{r}$ is continuous, and so we find by taking $r,s>1$:
	\begin{align*}
		\sum_{\mu\in\Lambda_N}[\mu]^{-2(d+1)}|\epsilon(\psi_\mu,b-c_N)|^2&\lesssim N^{-2}\|b-c_N\|_{4\alpha-(d+1)}^2+(M^{2\alpha-(d+1)}+N^{4\alpha-2(d+1)})^2\|b\|_{d+1}^2.
	\end{align*}
\end{proof}
\end{appendix}
\end{document}